\def\dsp{\displaystyle}
\def\uo{u^{\rm{OL}}}
\def\vo{v^{\rm{OL}}}
\def\uol{u_k^{\rm{OL}}}
\def\vol{v_k^{\rm{OL}}}
\def\uolj{u_j^{\rm{OL}}}
\def\DS{\displaystyle}
\newtheorem{theorem}{Theorem}[section]
\newtheorem{lemma}[theorem]{Lemma}
\newtheorem{proposition}[theorem]{Proposition}
\theoremstyle{definition}
\theoremstyle{remark}
\newtheorem{remark}[theorem]{Remark}
\numberwithin{equation}{section}
\begin{document}

\title{Dynamic Modeling of Nontargeted and Targeted Advertising Strategies in an Oligopoly 
}


\author{Chloe A. Fletcher}
\address[CAF]{Department of Computer Science, College of Charleston, Charleston, SC 29424}
\email[CAF]{fletcherca@g.cofc.edu}
\author{Jason S. Howell}
\address[JSH]{Department of Mathematics, College of Charleston, Charleston, SC 29424}
\email[JSH]{howelljs@cofc.edu}


\keywords{advertising, oligopoly, targeting, Vidale-Wolfe, Lanchester , Nash equilibrium}
\subjclass[2010]{91A23, 91A10, 91A80, 90B60}

\maketitle

\begin{abstract}
With the growing collection of sales and marketing data and depth of detailed knowledge of consumer habits and trends, firms are gaining the capability to discern customers of other firms from the potential market of uncommitted consumers.  Firms with this capability will be able to implement a strategy where the advertising effort towards customers of competing firms may differ from that towards uncommitted consumers.  In this work, dynamic models for advertising in an oligopoly setting with fixed total market size and sales decay are presented.  Two models are described in detail: a nontargeted model in which the advertising effort is the same for both categories of prospective customers, and a targeted model that gives firms the capability to allocate effort across the two categories differently.  In the differential game setting, open-loop and closed-loop Nash equilibrium strategies are derived for both models.  Several strategic questions that a firm may face when practicing targeted advertising on a fixed budget are discussed and addressed.

\end{abstract}

\section{Introduction}
\label{intro}

Recent advances in technology are enabling the collection of data that constructs a detailed profile of consumer behavior.  Firms who gain sales from advertising in a competitive setting will be faced with the challenge of selecting the ideal level of tailoring advertising campaigns based on consumer preferences. While analyzing  internet browsing history data to target advertisements is now common practice, the spread of customer data collection and analysis to other social and technology sectors is emerging as a powerful tool. 
 This will enable firms to segregate their potential customers into compartments: individuals in the market for a product or service offered by the firm, and current consumers of comparable products or services offered by competing firms.

With this capability in place, firms may wish to tailor advertising campaigns so that the effort (allocation of advertising expenditures) towards customers of other firms may differ from the effort towards uncommitted customers.  This may provide a more efficient distribution of advertising expenditures, and may be advantageous in a number of situations, including a highly competitive niche product market in which sales are gained primarily from customers of other firms, as well as the case where sales are gained primarily from the market potential.  This model may be useful in markets such as cable and satellite television service, wireless communication providers, security services, or even political campaigns.   This model can also represent a single firm that offers multiple mutually-exclusive tiers or levels within a single product line, such as lawn care service or wireless communication providers. In this case, the firm may decide (for example, through targeted mailings) to advertise more aggressively toward customers of the lower-tier services (with advertisements attempting to convince existing customers to upgrade) as opposed to potential customers, or vice-versa.





Dynamic models of advertising competition in the context of differential equations and dynamic games have been a rich focus of study over the last sixty years.  Kimball \cite{Kimball1957} employed the Lanchester \cite{Lanchester1916} combat model to represent two firms competing for market share via advertising efforts.  The model of Vidale and Wolfe \cite{Vidale1957} viewed the time rate of change of sales rate as a function of advertising expenditure and sales decay.  Several related models and contributions are found in \cite{Leitmann1978}, \cite{Feichtinger1983}, \cite{Chin1992}, \cite{Mesak1993}, \cite{jarrar2004}, \cite{bass2005}, \cite{Bass2005b}, \cite{Nguyen2006}, \cite{Jie07}, \cite{Jorgensen2010}, \cite{Dragone2010}, \cite{Krishna2010}, \cite{Liu2012},  \cite{Prasad2012}, and \cite{jorg15}.  
The reader is referred to \cite{Sethi1977}, \cite{Erickson1995}, and \cite{Huang2012} for comprehensive surveys of work in dynamic advertising models, as well as \cite{Jorgensen1982}, \cite{Moorthy1993}, \cite{Dockner2000}, \cite{Jorgensen2004} for advertising models in a differential game setting.  Of note, \cite{Hartl2005} extended the advertising model of \cite{Nerlove1962}, based on a stock of goodwill, to the situation where differing advertising policies are prescribed towards potential customers and existing customers.

Often the study of these dynamic models is undertaken within the setting of a non-cooperative differential game in which competing firms wish to employ strategies that maximize a profit objective function. In this scenario, firms set their advertising strategy by allocating a particular amount of effort (usually in the form of advertising expenditures), and this effort is considered to be the control variable.   A primary objective of the analysis of these games is to determine {\em Nash equilibrium}, which defines the strategies for all firms that are optimal in the sense that no single firm stands to gain from unilaterally modifying their own strategy (i.e., changing their own control).  This analysis can lead to an {\em open-loop} Nash equilibrium, in which the control variable depends only on time, or a {\em closed-loop} equilibrium, in which the control is also dependent on the current state of the system (the sales rates/market shares of all firms in the competitive market). This makes a closed-loop solution more attractive, as its dependence on the current state of the system allows for continual adjustment of strategy, as opposed to the open-loop strategy that is determined at the outset of the time horizon.  However, computation of a closed-loop strategy is generally more difficult than an open-loop strategy and often requires the solution of boundary-value problems in ordinary or partial differential equations.

As many of these models comprise systems of nonlinear differential equations, it is also useful to determine if steady states of the system exist and analyze their stability properties.  Further insight into strategic decision-making can also be found by making certain assumptions about model parameters and optimization with respect to certain variables.

The motivation for the work presented here arises mainly from \cite{Fruchter1999b}, \cite{Wang2001} and \cite{Fruchter2004}.  Fruchter \cite{Fruchter1999b} extends the Lanchester-based model in \cite{Fruchter1997} and \cite{Fruchter1999a} to an expanding market size, allowing for a decreasing market potential available to all firms.  This was extended to the oligopoly setting where each competing firm offered a line of products in \cite{Fruchter2001}.   Wang and Wu \cite{Wang2001} present an extension of the duopoly Vidale-Wolfe model to an expanding market with sales decay.  Closed-loop Nash equilibria were derived in both \cite{Fruchter1999b} and \cite{Wang2001}.  Fruchter and Zhang \cite{Fruchter2004} consider a model that divides customers into two categories: repeat customers and customers of other firms.  This model is analyzed in a duopoly setting in which firms allocate advertising effort differently between these customer categories.  

However, none of the aforementioned Lanchester-based models existing in the literature allow for differing advertising policies towards uncommitted consumers and competitors' customers.  
In this work two dynamic advertising models in an oligopoly setting are presented.   In a {\em nontargeted} scheme, a firm does not discern between competitors' customers and the untapped market potential.  This results in a model where the only decision, or control, is the advertising expenditure.  However, in
the new {\em targeted} scheme, a differentiation is made between competitors' customers and the market potential, and the firm must determine the best course of action, i.e., the best allocation of expenditures/effort toward these two groups.  Both models allow for sales/market share decay through cancellation.  J{\o}rgensen and Sigu\'{e} \cite{jorg15} recently presented a model which allows for a different advertising policy towards competitors' customers ({\em offensive} advertising) and the market potential ({\em generic} advertising), however is it assumed that generic advertising may benefit all firms participating in the market.

The nontargeted model we describe is an extension of the Lanchester-based oligopoly model of an expanding market in \cite{Fruchter1999b} to  include sales/market share decay.  In addition for allowing a variable (as opposed to nonincreasing) market potential, the  cancellation rate also allows for an interpretation of customer retention through a reciprocal relationship.  These models can easily be extended to include effort and effectiveness parameters of customer retention activities. 
For this model, closed-loop strategies, based on the solution of a two-point boundary value problem, are derived and shown to form Nash equilibria.  By extending \cite{Fruchter1999b} to allow for cancellation, the nontargeted model can also be viewed as an extension of the duopoly model in \cite{Wang2001} to the Lanchester sales-rate oligopoly setting, providing somewhat of a convergence of the Vidale-Wolfe and Lanchester models.

The nontargeted model is then modified by allowing for a different allocation of effort across the market potential and competitors' customers.  This targeted model  also allows for varying effectiveness to effort ratios for the two submarkets.  Indeed, the targeted model leads to a more complex mathematical problem and  is somewhat unwieldy for an increasing number of firms when describing closed-loop Nash equilibria, as well as the steady-state sales rates for given sets of parameters.  Thus the derivation of closed-loop strategies is limited to the duopoly case, and the steady-states are derived when there are two or three competing firms.  Additionally, analysis of the behavior of the model in specific settings provides insight when addressing questions that a firm may be faced with when determining an allocation of advertising effort across these two submarkets.  A particular application of interest is the situation when a candidate in a political campaign is able to discern the voters committed to his/her opponent and must determine the best allocation of advertising expenditures in a fixed budget.  Indeed, a recent announcement detailed a partnership between satellite television providers for developing a common database of subscriber data with the intent of implementation of addressable advertising for political campaigns.\footnote{See http://adage.com/article/media/dish-directv-team-addressable-ad-efforts/291303/}  In this situation a political party or candidate wishing
to purchase advertising may be faced with the question: it is better to advertise more aggressively to perceived supporters of an opponent or to undecided voters? 

To summarize the contributions presented here relative to related works, the Table \ref{tab:model} details this contribution of this manuscript and  related works.  In the table, ``Targeting'' represents whether or not the model allows for a differing allocation of effort across customer categories. A recent summary of all related dynamic models of advertising competition is presented in the Online Appendix of \cite{Huang2012}.
\renewcommand{\arraystretch}{1.1}
\begin{table}
\caption{Summary of Related Works \label{tab:model}}
{\begin{tabular}{>{\raggedright}m{0.85in}ccccccc}\hline\noalign{\smallskip}
	&	Fruchter 	&		&		&		&	Fruchter 	&J{\o}rgensen&		\\
	&	and	&		&		&	Wang and	&	 and 	&and	&This	\\
	&	Kalisch \cite{Fruchter1997}	&	Fruchter \cite{Fruchter1999a}	&	Fruchter \cite{Fruchter1999b}	&	Wu \cite{Wang2001}	&	Zhang \cite{Fruchter2004}	& Sigu\'{e}\cite{jorg15}&	work	\\\hline\noalign{\smallskip}
Model Type	&	Duopoly	&	Oligopoly	&	Oligopoly	&	Duopoly	&	Duopoly&	Duopoly	&	Oligopoly	\\[1ex]
Sales Decay	&	No	&	No	&	No	&	Yes	&	No&	No	&	Yes	\\[1ex]
Effort To 
Market Potential	&	Yes	&	Yes	&	Yes	&	Yes	&	No	&	Yes&	Yes		\\[1ex]
Effort To Competitors' Customers	&	Yes	&	Yes	&	Yes	&	Yes	&	Yes&	Yes	&	Yes	\\[1ex]
Targeting	&	No	&	No	&	No	&	No	&	Yes	&	Yes&	Yes	\\[1ex]
Time {Horizon}	&	Infinite	&	Infinite	&	Infinite	&	Finite	&	Infinite	&	Finite&	Finite	\\[1ex]
Open-loop NE	&	Yes	&	Yes	&	Yes	&	Yes	&	Yes	&	No&	Yes	\\[1ex]
Closed-loop NE	&	Yes	&	Yes	&	Yes	&	Yes	&	Yes&	Yes	&	Yes	\\\noalign{\smallskip}\hline
\end{tabular}}
\end{table}

\renewcommand{\arraystretch}{1.0}


\section{The Nontargeted Advertising Model}\label{sec:ns}
Consider an industry with $N\ge 2$ competing firms.  We assume that each firm uses advertising as their major marketing instrument to increase sales, which is done by  convincing other customers to switch firms or by gaining prospects from the market potential.

For 
$n=1,2,\ldots,N$, let $x_n(t)$ represent firm $n$'s market share at time $t$ with the assumption that $0\le x_n(t)\le 1$ for all time $t\in [0,\infty)$.  By convention we assume that the advertising expenditures result in diminishing returns in the way of advertising effort, thus we let $u^2_n(t)$ represent the advertising expenditure of firm $n$ so that $u_n(t)$ is the advertising effort.  The parameter $\rho_k$ the advertising effectiveness to effort ratio (thus $\rho_ku_k(t)$ represents the advertising effectiveness of firm $k$'s campaign).  
Let $m$ represents the total possible sales of the market, so it satisfies
\begin{equation}
m=\sum_{k=1}^n s_k(t)+\varepsilon(t),
\label{eq:mp}
\end{equation}
where  $\varepsilon(t)$ represents the market potential at time $t$.  When $m=1$, as will often be assumed in the applications discussed later, $s_k$ represents a market share.  Note that the market potential satisfies
\begin{equation}
\dot{\varepsilon}(t)=-\sum_{k=1}^n \dot{s}_k(t).
\label{eq:mpdot}
\end{equation}
The oligopoly advertising model with market expansion in \cite{Fruchter1999b} is given by
\begin{equation}
\dot{s}_k(t)=\underbrace{\rho_ku_k(t)\left(m-s_k(t)\right)}_{\substack{\text{Gain from advertising}\\\text{towards non-customers}}} -\underbrace{s_k(t)\sum_{j=1, j\ne k}^n\rho_ju_j(t)}_{\substack{\text{Loss from competitors'}\\\text{advertising efforts}}},
\label{eq:fru}
\end{equation}
for $k=1,\ldots,n$, as the first term on the right hand side represents the gain in sales from the market potential and customers of other firms, and the second term represents the sales lost to competitors' advertising efforts. It is assumed that $s_k(t), \rho_k, u_k(t)$ are all positive for all $t$.  To incorporate sales decay, or cancellation, into the model let $c_k(t)\ge0$ represent the rate at which firm $k$ loses customers to the market potential. Then the nontargeted advertising model is given by
\begin{equation}
\dot{s}_k(t)=\underbrace{\rho_ku_k(t)\left(m-s_k(t)\right)}_{\substack{\text{Gain from advertising}\\\text{towards non-customers}}} -\underbrace{s_k(t)\sum_{j=1, j\ne k}^n\rho_ju_j(t)}_{\substack{\text{Loss from competitors'}\\\text{advertising efforts}}}-\underbrace{c_k(t)s_k(t)}_{\substack{\text{Loss from}\\\text{cancellation}}}.
\label{eq:non1}
\end{equation} 
In light of \eqref{eq:mpdot} we have that the market potential satisfies the differential equation
\[\dot{\varepsilon}(t)=\sum_{k=1}^nc_k(t)s_k(t)-\varepsilon(t)\sum_{k=1}^n \rho_ku_k(t),\]
and, as opposed to the model of \cite{Fruchter1999b}, does not indicate that the market potential is monotonic decreasing.
The model \eqref{eq:non1} can also be written as
\begin{equation}
\dot{s}_k(t)=\rho_ku_k(t)m-s_k(t)\left(c_k(t)+\sum_{j=1}^n\rho_ju_j(t)\right).
\label{eq:non2}
\end{equation}
Given initial values $s_k(0)=s_k^0$, the system of first-order equations \eqref{eq:non2} for $k=1,\ldots,n$ has the solution
\begin{equation}
s_k(t) = \left[ s_k^0+  \int_0^tm\rho_ku_k(\tau)e^{\psi_k(\tau)}\,d\tau\right]e^{-\psi_k(t)},
\label{eq:non3}
\end{equation}
where
\[\psi_k(t) = \int_0^t\left(c_k(\tau)+\sum_{j=1}^n\rho_ju_j(\tau)\right)\,d\tau.\]
\begin{remark}\label{rem:nonzero}
While there is no guarantee that $\lim_{t\to\infty}s_k(t) >0$, we have that $s_k(T)>0$ for any finite time horizon $[0,T]$.  Thus a finite time horizon is considered in the discussion of the differential game and Nash equilibrium in Section \ref{sec:nsnash}.
\end{remark}

\begin{remark}
When \eqref{eq:non1} is written in terms of firm $k$'s market share $x_k=s_k/m$, the duopoly case reduces to the model of \cite{Wang2001}.
\end{remark}

\subsection{Nash Equilibrium}\label{sec:nsnash}
In this section we discuss the derivation of Nash equilibrium strategies for the nontargeted advertising model over a finite time horizon.  
Assume the discount rate $r$ is uniform for all firms and let $x_k=s_k/m$. Then the profit objective function is given by
\begin{equation}
\Pi_k = \int_0^T \left(Q_kx_k(t)-u_k^2(t)\right)e^{-rt}\,dt,
\label{eq:nsprof}
\end{equation}
where $Q_k=q_km$ and $q_k$ is the gross profit rate.
The differential game is characterized as follows: the problem of oligopolist $k$, $k=1,\ldots,n$ is to find the control $u_k$ such that
\begin{equation}
\max_{u_k} \Pi_k(u_1,\ldots,u_k,\ldots,u_n)=\max_{u_k}
\int_0^T \left(Q_kx_k(t)-u_k^2(t)\right)e^{-rt}\,dt,
\label{eq:nsgame1}
\end{equation}
subject to
\begin{equation}
\dot{x}_k=\rho_ku_k-x_k\left(c_k+\sum_{j=1}^n\rho_ju_j\right),\qquad x_k(0)=x_k^0.
\label{eq:nsgame2}
\end{equation}
The control $u_k$ is admissible provided $0\le u_k<\infty$.  For a Nash equilibrium closed-loop strategy, we must find $u_k^*(t,x_1,\ldots,x_n,x_1^0,\ldots,x_n^0)$ such that
\begin{equation}
\Pi_k(u_1^*,\ldots,u_k^*,\ldots,u_n^*) \ge \Pi_k(u_1^*,\ldots,u_{k-1}^*,u_k,u_{k+1}^*\ldots,u_n^*)\quad \forall u_k,\quad k=1,\ldots,n.
\label{eq:nsgame3}
\end{equation}

\def\uol{u_k^{\text{OL}}}
\begin{theorem}\label{thm:nsnash}
Assume $x_k^*$ and $\varphi_k$, $k=1,\ldots,n$, solve the following two-point boundary value problem of $2n$ equations: 
\begin{alignat}{1}
\dot{x}_k&= \frac{1}{2}\left[\rho^2_kQ_k\varphi_k(t)(1-x_k)e^{rt}-x_k\left(2c_k+\sum_{j=1}^n\rho_j^2Q_j\varphi_j(t)e^{rt}(1-x_j)\right)\right],\, x_k(0)=x_k^0,
\label{eq:thm11}\\
\dot{\varphi_k}&=c_k\varphi_k(t)+\frac{1}{2}\varphi_k(t)\sum_{j=1}^n\rho_j^2Q_j\varphi_j(t)(1-x_j)e^{rt} - e^{-rt}, \quad\varphi_k(T)=0.\label{eq:thm12}
\end{alignat}
Then the functions
\begin{equation}
u_k^{*} = \frac{1}{2}\rho_kQ_k\varphi_k(t)e^{rt}(1-x_k),\qquad k=1\ldots,n,
\label{eq:nsclosed}
\end{equation}
where the $x_k$ satisfy \eqref{eq:nsgame2},
form a global Nash equilibrium closed-loop strategy for the differential game \eqref{eq:nsgame1}--\eqref{eq:nsgame2}, and the functions
\begin{equation}
u_k^{\text{OL}} = \frac{1}{2}\rho_kQ_k\varphi_k(t)e^{rt}(1-x^*_k),\qquad k=1\ldots,n,
\label{eq:nso}
\end{equation}
form a open-loop strategy for the differential game \eqref{eq:nsgame1}--\eqref{eq:nsgame2}.
\end{theorem}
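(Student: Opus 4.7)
My plan is the standard Pontryagin approach for a non-cooperative differential game: for each firm $k$, hold the remaining controls $u_j$, $j\ne k$, at the candidate equilibrium values and verify that the proposed $u_k^*$ solves firm $k$'s resulting one-agent best-response problem. The observation that makes this tractable is that firm $k$'s profit in \eqref{eq:nsgame1} depends on the state only through $x_k$, so firm $k$'s Hamiltonian carries a single adjoint $\lambda_k$ and a scalar state $x_k$ governed by \eqref{eq:nsgame2}.

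To derive \eqref{eq:thm11}--\eqref{eq:nsclosed} I would form the Hamiltonian
\[
H_k=(Q_k x_k-u_k^2)e^{-rt}+\lambda_k\left[\rho_k u_k(1-x_k)-x_k\left(c_k+\sum_{j\ne k}\rho_j u_j\right)\right].
\]
Pontryagin's principle then supplies (i) the pointwise maximizer $u_k=\tfrac12\rho_k\lambda_k(1-x_k)e^{rt}$, (ii) the adjoint ODE $\dot{\lambda}_k=\lambda_k\bigl(c_k+\sum_{j=1}^n\rho_j u_j\bigr)-Q_ke^{-rt}$, and (iii) the transversality $\lambda_k(T)=0$. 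Rescaling via $\varphi_k=\lambda_k/Q_k$, substituting the form (i) into every $u_j$ appearing in (ii) and in \eqref{eq:nsgame2}, and collecting terms should collapse the $n$ state and $n$ costate equations into exactly \eqref{eq:thm11}--\eqref{eq:thm12}, while (i) gives the feedback \eqref{eq:nsclosed} directly. The boundary data in the theorem statement are precisely the initial condition on $x_k$ and the PMP transversality on $\varphi_k$.

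Because $H_k$ is strictly concave in $u_k$ through the $-u_k^2$ term, the first-order condition delivers a pointwise global maximum, and a Mangasarian-type sufficiency argument---integrating the concavity inequality against $\lambda_k$ over $[0,T]$ and using $\lambda_k(T)=0$ together with the common initial data $x_k(0)=x_k^0$---upgrades stationarity to the inequality \eqref{eq:nsgame3} for each firm's best response. The open-loop claim \eqref{eq:nso} then follows by evaluating the closed-loop feedback along the equilibrium trajectory $x_k^*$, which is admissible on $[0,T]$ by Remark \ref{rem:nonzero}. The step I expect to be hardest is the closed-loop/open-loop dichotomy: in the closed-loop game a deviation by firm $k$ perturbs every $x_j$ through \eqref{eq:nsgame2}, so in principle firm $k$'s best-response problem is $n$-dimensional and carries cross-adjoints $\lambda_{kj}$ for $j\ne k$. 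The heart of the proof is to show that these cross-adjoints either vanish or drop out of $\partial_{u_k}H_k=0$---using that $u_j^*$ depends only on $x_j$ and that the resulting linear ODEs for $\lambda_{kj}$ are homogeneous with terminal data $\lambda_{kj}(T)=0$---so that the single-adjoint analysis above actually suffices in the closed-loop setting.
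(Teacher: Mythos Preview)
Your derivation of the necessary conditions is essentially what the paper does: a full Hamiltonian with cross-adjoints $\lambda_{k,j}$, the observation that for $j\ne k$ the costate equations are linear and homogeneous with $\lambda_{k,j}(T)=0$ (hence $\lambda_{k,j}\equiv 0$), and then the rescaling $\varphi_k=e^{-rt}\lambda_{k,k}/Q_k$ (equivalently your $\lambda_k/Q_k$ in present-value form) to obtain \eqref{eq:thm11}--\eqref{eq:thm12} and the feedback \eqref{eq:nsclosed}. That part of your plan is correct and matches the paper.

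The gap is the sufficiency step. A Mangasarian argument needs $H_k$ to be jointly concave in $(x_k,u_k)$ along the optimal costate, but the bilinear term $\lambda_k\rho_k u_k(1-x_k)$ destroys this: the Hessian in $(x_k,u_k)$ has determinant $-\lambda_k^2\rho_k^2<0$ whenever $\lambda_k\ne 0$, so $H_k$ is indefinite. The Arrow variant fails too, because the maximized Hamiltonian picks up a term $\tfrac14\lambda_k^2\rho_k^2(1-x_k)^2e^{rt}$, which is \emph{convex} in $x_k$. So neither standard sufficient condition upgrades the stationarity to the global inequality \eqref{eq:nsgame3}.

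The paper handles this with the direct verification device of \cite{Fruchter1997}: one adds the identity
\[
Q_k\varphi_k(0)x_k^0+\int_0^T\frac{d}{dt}\bigl(Q_k\varphi_k(t)x_k(t)\bigr)\,dt=0
\]
to $\Pi_k(u_1,\dots,u_n)$, expands $\frac{d}{dt}(Q_k\varphi_k x_k)$ using the state equation \eqref{eq:nsgame2} and the costate equation \eqref{eq:thm12} (the latter written in terms of the open-loop values $u_j^{\mathrm{OL}}$ since $\varphi_j$ is built from $x_j^*$), and reorganizes so that every term containing $u_k$ collapses to $(2u_ku_k^*-u_k^2)e^{-rt}$ while the remainder is independent of firm $k$'s control. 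Subtracting then yields
\[
\Pi_k(u_1^*,\dots,u_k^*,\dots,u_n^*)-\Pi_k(u_1^*,\dots,u_k,\dots,u_n^*)=\int_0^T(u_k^*-u_k)^2e^{-rt}\,dt\ge 0,
\]
which is the global Nash inequality. This ``complete the square'' computation is what replaces Mangasarian here; your plan should swap it in for the concavity argument.
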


\begin{proof}
The argument is an extension of the proof of Theorem 1 in \cite{Fruchter1999b} and utilizes the technique introduced in \cite{Fruchter1997} to show the closed-loop strategies are optimal.
The current value Hamiltonian of firm $k$ is given by
\begin{equation}
H_k(x_k,u_1,\ldots,u_n,\lambda_k,t) = Q_kx_k-u_k^2 +\lambda_{k,k}\dot{x}_k+\sum_{\substack{j=1\\j\ne k}}^n\lambda_{k,j}\dot{x}_j
\end{equation}
where $\lambda_{k,j}$ are the costate variables.  We have that $\partial H_k/\partial u_k=0$ precisely when
\begin{equation}
-2u_k+\rho_k\left(\lambda_{k,k}(1-x_k)  -\sum_{\substack{j=1\\j\ne k}}^n\lambda_{k,j}x_j \right)=0. 
\label{eq:nsgame4}
\end{equation}
for $k=1\ldots,n$.  The optimality conditions are then given by \eqref{eq:nsgame4} and
\begin{eqnarray}
\dot{\lambda}_{k,k} &=&r\lambda_{k,k}-\frac{\partial H_k}{\partial x_k} =r\lambda_{k,k}-Q_k+\lambda_{k,k}\left(c_k+\sum_{j=1}^n\rho_ju_j\right),\qquad \forall k=1,\ldots,n,
\label{eq:nsgame5}\\
\dot{\lambda}_{k,j} &=&r\lambda_{k,j}-\frac{\partial H_k}{\partial x_j} =\lambda_{k,j}\left(r+c_j+\sum_{i=1}^n\rho_iu_i\right), \qquad\forall k,j=1,\ldots,n, j\ne k,
\label{eq:nsgame6}
\end{eqnarray}
with transversality conditions 
\begin{equation}
\lambda_{k,j}(T)x_k(T)=0,\qquad \forall k,j=1,\ldots,n,
\label{eq:nsgame7}
\end{equation}
that imply $\lambda_{k,j}(T)=0$ for all $k$ and $j$ (see Remark \ref{rem:nonzero}).
Now \eqref{eq:nsgame6} and \eqref{eq:nsgame7} imply that $\lambda_{k,j}=0$ for all $j\ne k$.  Then allowing $\lambda_k:=\lambda_{k,k}$, \eqref{eq:nsgame4} implies
\begin{equation}
u_k=\frac{1}{2}\rho_k \lambda_k(1-x_k).
\label{eq:nsgame8}
\end{equation}
This is the same control $u_k$ found in \cite{Fruchter1999b}.  Then, substituting \eqref{eq:nsgame8} into $\dot{x}_k=\partial H_k/\partial \lambda_k$ and \eqref{eq:nsgame5} we obtain the two-point boundary value problem with $2n$ equations given by: for $k=1,\ldots,n$, 
\begin{alignat}{3}
\dot{x}_k&= \frac{1}{2}\left[\rho^2_k\lambda_k(1-x_k)-x_k\left(2c_k+\sum_{j=1}^n\rho_j^2\lambda_j(1-x_j)\right)\right],\qquad &x_k(0)&=&x_k^0,
\label{eq:tpbvp1}\\
\dot{\lambda}_k&=\lambda_k \left(r+c_k+\frac{1}{2}\sum_{j=1}^n\rho_j^2\lambda_j(1-x_j)\right)-Q_k. &\lambda_k(T)&=&0.\label{eq:tpbvp2}
\end{alignat}
Make a change of variable via the definition
$\varphi_k(t)=e^{-rt}\lambda_k(t)/Q_k$,  $k=1,\ldots,n$.  As opposed to \cite{Fruchter1999b}, in which the system of $2n$ equations can be reduced to $n+1$ equations, \eqref{eq:tpbvp1}--\eqref{eq:tpbvp2} cannot.  However, the substitution for $\lambda_k$ is still useful as will be demonstrated later.
Then \eqref{eq:tpbvp1}--\eqref{eq:tpbvp2} can be written as 
\begin{alignat}{3}
\dot{x}_k&= \frac{1}{2}\left[\rho^2_kQ_k\varphi(t)(1-x_k)e^{rt}-x_k\left(2c_k+\sum_{j=1}^n\rho_j^2Q_j\varphi(t)e^{rt}(1-x_j)\right)\right],\quad &x_k(0)&=&x_k^0,
\label{eq:tpbvp3}\\
\dot{\varphi_k}&=c_k\varphi_k(t)+\frac{1}{2}\varphi_k(t)\sum_{j=1}^n\rho_j^2Q_j\varphi_j(t)(1-x_j)e^{rt} - e^{-rt}, &\varphi_k(T)&=&0.\label{eq:tpbvp4}
\end{alignat}
Let $x_k^*$, $\varphi_k$, $k=1,\ldots,n$ solve \eqref{eq:tpbvp3}--\eqref{eq:tpbvp4} and define $u_k^{*}$ and $u_k^{\text{OL}}$ as in \eqref{eq:nsclosed} and \eqref{eq:nso}, respectively.
The objective is to find an expression such that, when added to $\Pi_k$, demonstrates that \[\Pi_k(u_1^*,\ldots,u_{k-1}^*,u_k^*,u_{k+1}^*,\ldots,u_n^*) \ge \Pi_k(u_1^*,\ldots,u_{k-1}^*,u_k, u_{k+1}^*,\ldots,u_n^*).\]
We have
\[
\int_0^T\frac{d}{dt}\left(Q_k\varphi_k(t)x_k(t)\right)\,dt = Q_k\varphi_k(t)x_k(T)\bigg|_{0}^{T} = -Q_k\varphi_k(0)x_k^0,
\]
so that
\begin{equation}
Q_k\varphi_k(0)x_k^0+\int_0^T\frac{d}{dt}\left(Q_k\varphi_k(t)x_k(t)\right)\,dt = 0
\label{eq:zs1}
\end{equation}
Using \eqref{eq:nsgame2}, \eqref{eq:tpbvp4}, and \eqref{eq:nsclosed}, we have
\begin{align}
\frac{d}{dt}&\left(Q_k\varphi_k x_k\right) =
Q_k\dot{\varphi}_kx_k+Q_k\varphi_k\dot{x}_k\nonumber\\
&=Q_kx_k\left(c_k\varphi_k+\frac{1}{2}\varphi_k\sum_{j=1}^n\rho_j^2Q_j\varphi_j(1-x^*_j)e^{rt} - e^{-rt} \right)+Q_k\varphi_k\left(\rho_ku_k-x_k\left(c_k+\sum_{j=1}^n\rho_ju_j\right)\right)\nonumber\\
&=Q_kx_kc_k\varphi_k-Q_kx_ke^{-rt} +\frac{1}{2}Q_k\varphi_kx_k\sum_{j=1}^n\rho_j^2Q_j\varphi_j(1-x^*_j)e^{rt}
+Q_k\varphi_k\rho_ku_k\nonumber\\
&\hskip 0.4in-Q_k\varphi_k x_k c_k - Q_k\varphi_k x_k\sum_{j=1}^n\rho_ju_j\nonumber\\
&=\frac{1}{2}Q_k\varphi_kx_k\sum_{j=1}^n\rho_j^2Q_j\varphi_j(1-x^*_j)e^{rt}
+Q_k\varphi_k\rho_ku_k - Q_k\varphi_k x_k\sum_{j=1}^n\rho_ju_j-Q_kx_ke^{-rt}\nonumber\\
&=Q_k\varphi_k\rho_k(u_k-\uol+\uol)+Q_k\varphi_k x_k\sum_{j=1}^n\rho_j \left(\underbrace{\frac{1}{2}\rho_jQ_j\varphi_j e^{rt}(1-x^*_j)}_{=\uolj}
 - u_j\right)-Q_kx_ke^{-rt}\nonumber\\
&=Q_k\varphi_k\rho_k(u_k-\uol)+Q_k\varphi_k\rho_k\uol-Q_k\varphi_k x_k\sum_{j=1}^n\rho_j \left(u_j-\uolj
\right)-Q_kx_ke^{-rt}\nonumber\\ 
&=Q_k\varphi_k\rho_k(u_k-\uol)+Q_k\varphi_k\rho_k\uol-Q_kx_ke^{-rt}-Q_k\varphi_k x_k\left(\rho_k(u_k-\uol) + \sum_{\substack{j=1\\j\ne k}}^n\rho_j \left(u_j-\uolj
\right)   \right) \nonumber\\
&=(u_k-\uol)Q_k\varphi_k\rho_k(1-x_k)+Q_k\varphi_k\rho_k\uol-Q_kx_ke^{-rt}-Q_k\varphi_k x_k\sum_{\substack{j=1\\j\ne k}}^n\rho_j \left(u_j-\uolj
\right) \nonumber\\
&=(u_k-\uol)\underbrace{\frac{1}{2}\rho_kQ_k\varphi_k e^{rt}(1-x_k)}_{=u_k^*}(2e^{-rt})+Q_k\varphi_k\rho_k\uol-Q_kx_ke^{-rt}\nonumber\\
&\hskip 0.4in +2\left(\frac{1}{2}\right)\left(\frac{\rho_k}{\rho_k}\right)Q_k\varphi_k\left(e^{rt}\right)\left(e^{-rt}\right) (1-x_k-1)\sum_{\substack{j=1\\j\ne k}}^n\rho_j \left(u_j-\uolj
\right) \nonumber\\
&=\left[2(u_k-\uol)u_k^*+\rho_kQ_k\varphi_k e^{rt}\uol-Q_kx_k\right]e^{-rt}\nonumber\\
&\hskip 0.4in +\left[\frac{2}{\rho_k}\left(\underbrace{\frac{1}{2}\rho_kQ_k\varphi_k e^{rt}(1-x_k)}_{u_k^*}\right)\sum_{\substack{j=1\\j\ne k}}^n\rho_j \left(u_j-\uolj
\right)-Q_k\varphi_k e^{rt}\sum_{\substack{j=1\\j\ne k}}^n\rho_j \left(u_j-\uolj
\right)\right]e^{-rt} \nonumber\\
&=\biggl[2u_ku_k^*-2\uol u_k^*+\rho_kQ_k\varphi_k e^{rt}\uol-Q_kx_k 
\nonumber\\&\hskip 0.4in 
+\frac{2u_k^*}{\rho_k}\sum_{\substack{j=1\\j\ne k}}^n\rho_j \left(u_j-\uolj
\right)-Q_k\varphi_k e^{rt}\sum_{\substack{j=1\\j\ne k}}^n\rho_j \left(u_j-\uolj
\right)\biggr]e^{-rt}. \label{eq:zs2}
\end{align}
Then \eqref{eq:zs1} and \eqref{eq:zs2} together give
\begin{align}
\Pi_k&(u_1,\ldots,u_n)= \Pi_k(u_1,\ldots,u_n)+0\nonumber\\
&= \Pi_k(u_1,\ldots,u_n)+Q_k\varphi(0)x_k^0+\int_0^T\frac{d}{dt}\left(Q_k\varphi_k(t)x_k(t)\right)\,dt\nonumber\\
&=Q_k\varphi_k(0)x_k^0+ \int_0^T\left(Q_kx_k-u_k^2\right)e^{-rt}\,dt+\int_0^T\frac{d}{dt}\left(Q_k\varphi_k(t)x_k(t)\right)\,dt\nonumber\\
&=Q_k\varphi_k(0)x_k^0+ \int_0^T\left[ 2u_ku_k^*-2\uol u_k^*-u_k^2\right]e^{-rt}\,dt
\nonumber\\
&\, +\int_0^T\left[ \rho_kQ_k\varphi_k e^{rt}\uol+\frac{2u_k^*}{\rho_k}\sum_{\substack{j=1\\j\ne k}}^n\rho_j \left(u_j-\uolj
\right)-Q_k\varphi_k e^{rt}\sum_{\substack{j=1\\j\ne k}}^n\rho_j \left(u_j-\uolj
\right) \right]e^{-rt}\,dt.\label{eq:zs3}
\end{align}
Note that the second integral is independent of the $k$th argument of $\Pi$.  Then we have
\begin{multline}
\Pi_k(u_1^*,\ldots,u_{k-1}^*,u_k^*,u_{k+1}^*,\ldots,u_n^*) = Q_k\varphi_k(0)x_k^0+ \int_0^T\left[ {u_k^*}^2-2\uol u_k^*\right]e^{-rt}\,dt\\
+\int_0^T\left[ \rho_kQ_k\varphi_k e^{rt}\uol+\frac{2u_k^*}{\rho_k}\sum_{\substack{j=1\\j\ne k}}^n\rho_j \left(u_j^*-\uolj
\right)-Q_k\varphi_k e^{rt}\sum_{\substack{j=1\\j\ne k}}^n\rho_j \left(u_j^*-\uolj
\right) \right]e^{-rt}\,dt,
\label{eq:zs4}
\end{multline}
and
\begin{multline}
\Pi_k(u_1^*,\ldots,u_{k-1}^*,u_k,u_{k+1}^*,\ldots,u_n^*) = Q_k\varphi_k(0)x_k^0+ \int_0^T\left[ 2u_ku_k^*-2\uol u_k^*-u_k^2\right]e^{-rt}\,dt\\
+\int_0^T\left[ \rho_kQ_k\varphi_k e^{rt}\uol+\frac{2u_k^*}{\rho_k}\sum_{\substack{j=1\\j\ne k}}^n\rho_j \left(u_j^*-\uolj
\right)-Q_k\varphi_k e^{rt}\sum_{\substack{j=1\\j\ne k}}^n\rho_j \left(u_j^*-\uolj
\right) \right]e^{-rt}\,dt.
\label{eq:zs5}
\end{multline}
Subtracting \eqref{eq:zs5} from \eqref{eq:zs4} gives
\begin{multline}
\Pi_k(u_1^*,\ldots,u_{k-1}^*,u_k^*,u_{k+1}^*,\ldots,u_n^*) -
\Pi_k(u_1^*,\ldots,u_{k-1}^*,u_k,u_{k+1}^*,\ldots,u_n^*)\\
=
\int_0^T\left[{u_k^*}^2-2u_ku_k^*+u_k^2\right]e^{-rt}\,dt
=\int_0^T\left({u_k^*}-u_k\right)^2e^{-rt}\,dt\ge 0.
\label{eq:zs6}
\end{multline}
This shows that the closed-loop strategy \eqref{eq:nsclosed} is a Nash equilibrium.
\end{proof}

We remark that, if $c_k=0$ for all $k$, then $\varphi_k(t)=\varphi(t)$ is independent of $k$ and the two-point boundary value problem of $2n$ equations \eqref{eq:thm11}--\eqref{eq:thm12} reduces to the same $n+1$ equations of Theorem 1 of \cite{Fruchter1999b} and therefore yields the same closed-loop strategy.

While computation of analytic solutions to the two-point boundary value problem in \eqref{eq:thm11}--\eqref{eq:thm12}  is generally infeasible, numerical solutions can easily be obtained.  In Figure \ref{fig:nstp}, two different solutions are computed for the duopoly case.  In both computations, the discount rate is taken to be $r=0.1$ and the profit rates $Q_1=Q_2=1$.  We assume that firm 2's effectiveness to effort ratio is 20\% better than firm 1 ($\rho_1=1.0, \rho_2=1.2$) and that both firms have an identical initial market share of 40\%.  The figure on the left is the solution for $c_1=c_2=0$, and the figure on the right is with $c_1=0.1, c_2=0.2$.  It is easy to see that in the absence of sales decay, the market potential tends to zero, while there is a substantial market potential in the case with nonzero sales decay.   In both cases, the superior effectiveness of firm 2's campaign leads to a leading market share, even when the sales decay rate is higher.
\begin{figure}[ht]
\caption{Solutions to \eqref{eq:thm11}--\eqref{eq:thm12} and closed-loop strategies  \eqref{eq:nsclosed}\label{fig:nstp}}
{\subfigure[$c_1=c_2=0$]{%
\includegraphics[width=0.65\textwidth]{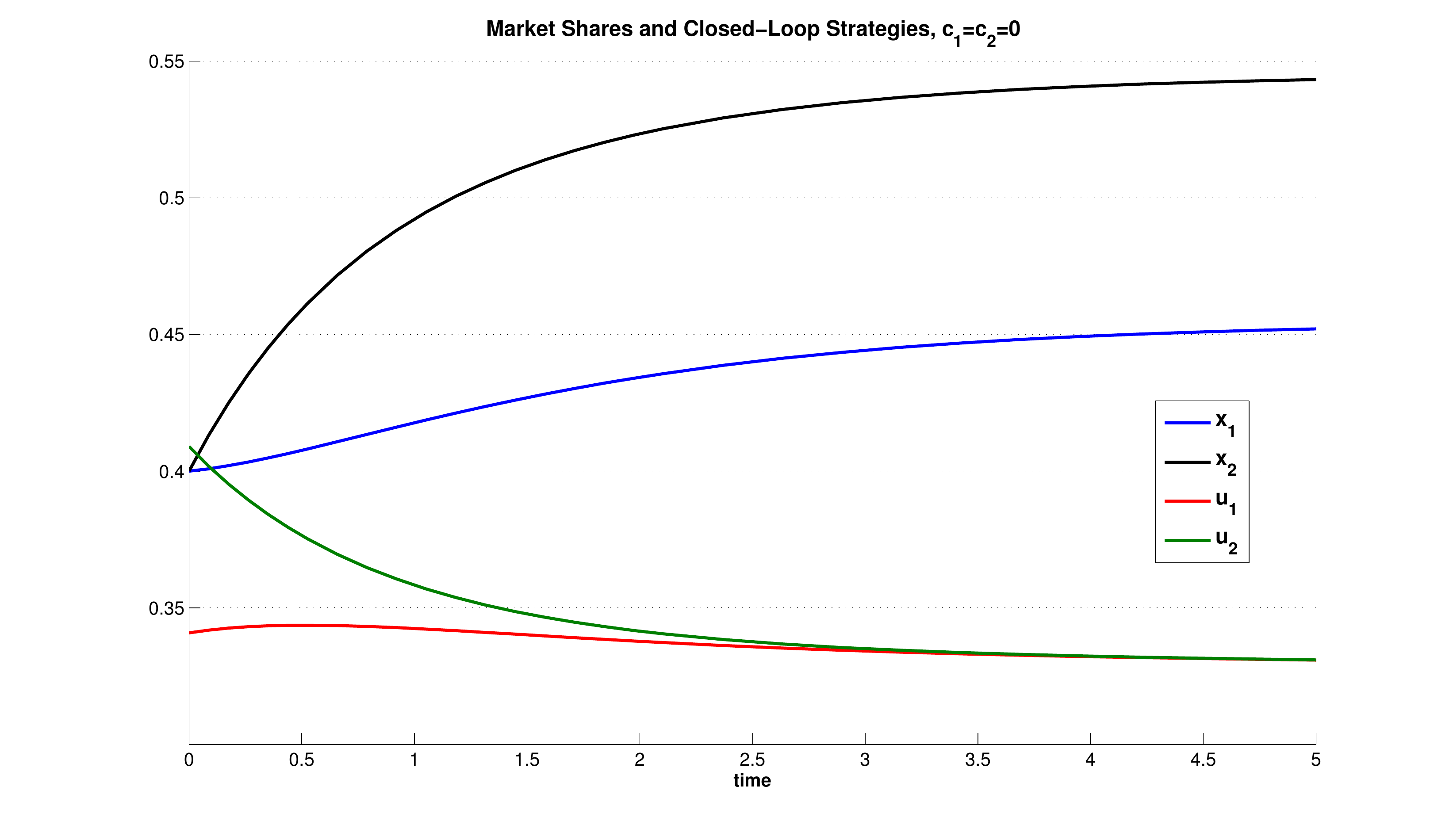}
\label{fig:nstp1}}
\quad
\subfigure[$c_1=0.1$, $c_2=0.2$]{%
\includegraphics[width=0.65\textwidth]{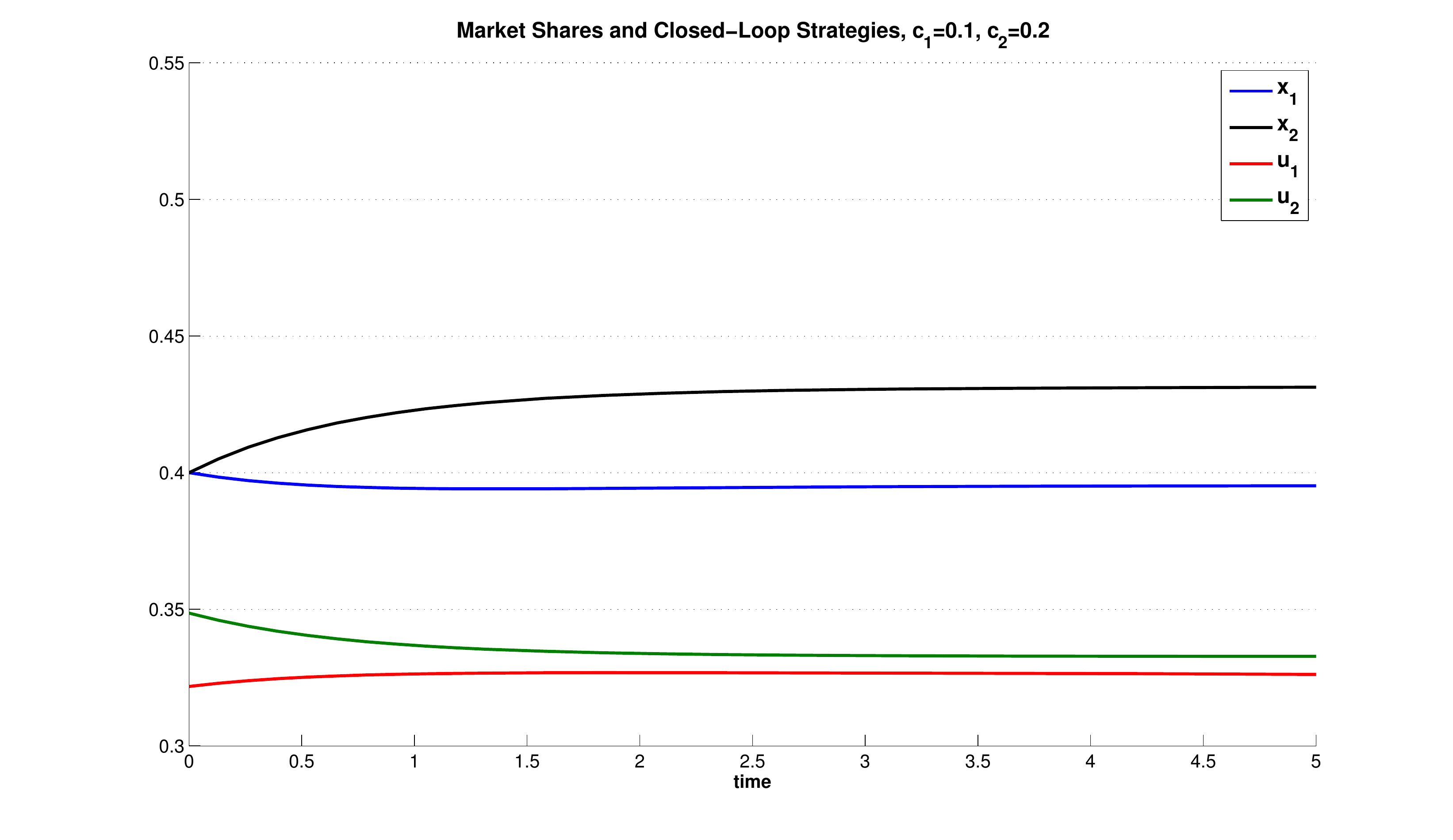}
\label{fig:nstp2}}}
\end{figure}

\subsection{Analysis of Steady States of the Nontargeted Model}\label{sec:nssteady}

For analysis of steady states of the nontargeted advertising model, the parameters $u_i$, $\rho_i$, and $c_i$ are all assumed to be positive constants. For notational simplicity the dependence on $t$ of $s_i$ will be suppressed.  The steady state solutions are found by setting the right-hand sides of \eqref{eq:non2} to zero for $k=1,\ldots,n$, i.e., 
\begin{equation}
\rho_ku_k(t)m-s_k(t)\left(c_k(t)+\sum_{j=1}^n\rho_ju_j(t)\right)=0,\qquad k=1,\ldots,n.
\label{eq:nseq}
\end{equation}

\begin{lemma}\label{lem:nsequil}
Let 
\begin{equation}
U_n = \sum_{j=1}^n\rho_ju_j.
\label{eq:Udef}
\end{equation}
Then the system \eqref{eq:nseq} has a unique solution given by
\begin{equation}
s_k=\frac{m\rho_ku_k}{c_k+U_n}.
\label{eq:nseq1}
\end{equation}
\end{lemma}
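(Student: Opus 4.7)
The plan is to exploit a structural observation: the system \eqref{eq:nseq} is actually decoupled in the state variables. The $k$-th equation contains only $s_k$ on the left-hand side, since the sum $\sum_{j=1}^n \rho_j u_j$ appearing in the second factor involves only the controls and the advertising effectiveness ratios, not any $s_j$. Hence the $n$ equations separate and can each be solved individually in closed form for $s_k$.

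First, I would introduce the constant $U_n := \sum_{j=1}^n \rho_j u_j$ as in \eqref{eq:Udef} and rewrite the $k$-th equation of \eqref{eq:nseq} in the linear form
\[
\rho_k u_k m \;=\; s_k\bigl(c_k + U_n\bigr).
\]
Next, I would invoke the standing positivity hypotheses collected in Section \ref{sec:ns} (each $u_j$, $\rho_j$, and $c_k$ is a positive constant in the steady-state setting), which guarantee that $c_k + U_n > 0$. Dividing through by this nonzero coefficient yields both existence and uniqueness of $s_k$ simultaneously, producing the asserted formula
\[
s_k \;=\; \frac{m \rho_k u_k}{c_k + U_n}.
\]

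There is no real obstacle in this argument; the essential point to highlight is simply the decoupling, which is a special feature of the nontargeted model and contrasts with what one should expect for the targeted model analyzed later (where the steady-state system genuinely couples the $s_k$ through the competitors' allocations and is consequently only tractable for small~$n$). I would therefore keep the exposition brief, phrasing it as a direct algebraic consequence of the structure of \eqref{eq:non2} rather than as a fixed-point or linear-algebra argument, since no inversion of an $n\times n$ matrix is required.
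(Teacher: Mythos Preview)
Your proposal is correct and matches the paper's approach exactly: the paper does not give a formal proof but simply remarks that ``existence and uniqueness of the solution are guaranteed by the assumptions on the parameters,'' which is precisely the decoupling-plus-positivity argument you spell out. Your write-up is just a more explicit version of the same one-line observation.
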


Existence and uniqueness of the solution are guaranteed by the assumptions on the parameters.  In this case, it is clear that the long-term sales of firm $k$ are dependent on its cancellation rate and the advertising effectiveness of all firms - an increase in advertising effectiveness by any other firm or an increase in cancellation rate result in a decrease in sales.  The equilibrium market potential is given by
\[\varepsilon = m-\sum_{k=1}^n s_k = m\left(1-\sum_{k=1}^n\frac{\rho_ku_k}{c_k+U_n}  \right).\]
As $\dot{s}_k$ depends only on the parameters and $s_k$ itself, the Jacobian $J$ of the system \eqref{eq:non2} for $k=1,\ldots,n$ is diagonal with each diagonal entry
\begin{equation}
J_{ii} = -c_i-\dsp\sum_{j=1}^n\rho_ju_j,
\label{eq:nsjac}
\end{equation}
which implies that the eigenvalues of the system are always negative.  Thus the equilibrium point \eqref{eq:nseq1} is asymptotically stable.


\section{The Targeted Advertising Model}\label{sec:s}
The targeted advertising model is constructed by allowing firm $k$ to employ differing advertising campaigns towards the market potential and customers of other firms.  
Let $v_k(t)>0$ be the advertising effort of firm $k$ towards the market potential and let $\sigma_k>0$ be the corresponding effectiveness to effort ratio.  Then the targeted advertising model of firm $k$'s sales at time $t$ is
\begin{equation}
\dot{s}_k(t)=\underbrace{\sigma_kv_k(t)\varepsilon(t)}_{\substack{\text{Gain from}\\\text{market potential}}} +\underbrace{\rho_ku_k(t)\sum_{j=1, j\ne k}^ns_j(t)}_{\substack{\text{Gain from}\\\text{competitors' customers}}}  -\underbrace{s_k(t)\left(c_k(t)+\sum_{j=1, j\ne k}^n\rho_ju_j(t)\right)}_{\substack{\text{Loss from competitors' advertising }\\\text{efforts and cancellation}}},
\label{eq:sel}
\end{equation}
where the first term on the right hand side represents the gain in sales from the market potential, the second term represents the gain in sales from customers of other firms, and the third term represents the decrease in sales from cancellations and the cumulative efforts of other firms.
Using the definition of the market potential \eqref{eq:mp} we can write \eqref{eq:sel} as
\begin{equation}
\dot{s}_k(t)=\sigma_kv_k(t)\left(m-\sum_{j=1}^ns_j(t)\right) +\rho_ku_k(t)\sum_{j=1, j\ne k}^ns_j(t)  -s_k(t)\left(c_k(t)+\sum_{j=1, j\ne k}^n\rho_ju_j(t)\right).
\label{eq:sel2}
\end{equation}
It is also useful to view \eqref{eq:sel2} as
\begin{equation}
\dot{s}_k(t)=\sigma_kv_k(t)m+\left(\rho_ku_k(t)-\sigma_kv_k(t)\right)\sum_{j=1, j\ne k}^ns_j(t)  -s_k(t)\left(\sigma_kv_k(t)+c_k(t)+\sum_{j=1, j\ne k}^n\rho_ju_j(t)\right).
\label{eq:sel3}
\end{equation}

\begin{remark}
If $u_k=v_k$ and $\rho_k=\sigma_k$ for all $k$ then \eqref{eq:sel3} reduces to the nontargeted advertising model \eqref{eq:non1}.
\end{remark}

\subsection{Nash Equilibrium}\label{sec:snash}
As in Section \ref{sec:nsnash}, we let $x_k=s_k/m$ and consider the profit objective function given by
\begin{equation}
\Pi_k = \int_0^T \left(Q_kx_k(t)-(u_k^2(t)+v_k^2(t))\right)e^{-rt}\,dt,
\label{eq:sprof}
\end{equation}
where $Q_k=q_km$ and $q_k$ is the gross profit rate.
The differential game is characterized as follows: the problem of oligopolist $k$, $k=1,\ldots,n$ is to find the control $u_k$ such that
\begin{equation}
\max_{u_k, v_k} \Pi_k((u_1,v_1),\ldots,(u_n,v_n))=\max_{u_k, v_k}
\int_0^T \left(Q_kx_k(t)-(u_k^2(t)+v_k^2(t))\right)e^{-rt}\,dt,
\label{eq:sgame1}
\end{equation}
subject to
\begin{equation}
\dot{x}_k=\sigma_kv_k\left(1-\sum_{j=1}^nx_j\right) +\rho_ku_k\sum_{j=1, j\ne k}^nx_j  -x_k\left(c_k+\sum_{j=1, j\ne k}^n\rho_ju_j\right).
\label{eq:sgame2}
\end{equation}
The controls $u_k$ and $v_k$ are admissible provided $0\le u_k, v_k <\infty$.

For ease of presentation, we give the closed-loop Nash equilibrium strategies for the targeted duopoly modeled by
\begin{eqnarray}
\dot{x}_1&=& \sigma_1v_1(1-x_1-x_2)+\rho_1u_1x_2-x_1(c_1+\rho_2u_2),\label{eq:snash1}\\
\dot{x}_2&=& \sigma_2v_2(1-x_1-x_2)+\rho_2u_2x_1-x_2(c_2+\rho_1u_1).\label{eq:snash2}
\end{eqnarray}
In this case, a global Nash equilibrium strategy is a pair $(u_1^*, v_1^*), (u_2^*, v_2^*)$ such that
\begin{eqnarray}
\Pi_1((u_1^*, v_1^*),((u_2^*, v_2^*)) &\ge& \Pi_1((u_1, v_1^*),((u_2^*, v_2^*)),\label{eq:gne1}\\
\Pi_1((u_1^*, v_1^*),((u_2^*, v_2^*)) &\ge& \Pi_1((u_1^*, v_1),((u_2^*, v_2^*)),\label{eq:gne2}\\
\Pi_2((u_1^*, v_1^*),((u_2^*, v_2^*)) &\ge& \Pi_2((u_1^*, v_1^*),((u_2, v_2^*)),\label{eq:gne3}\\
\Pi_2((u_1^*, v_1^*),((u_2^*, v_2^*)) &\ge& \Pi_2((u_1^*, v_1^*),((u_2^*, v_2)),\label{eq:gne4}
\end{eqnarray}
for all admissible $u_1, u_2, v_1, v_2$.  The closed-loop Nash equilibrium
strategies are found given the solvability of a two-point boundary value problem with six equations and six unknowns.

\begin{theorem}\label{thm:snash}
Let $x_k^*$, $\varphi_{k,j}$, $k, j=1,2$, satisfy the following two-point boundary value problem:
\begin{eqnarray}
\dot{x}_k^*&=& \frac{1}{2}\sigma_k^2Q_k\varphi_{k,k}(1-x_1^*-x_2^*)^2e^{rt} +\frac{1}{2}\rho_k^2Q_k(\varphi_{k,k}-\varphi_{k,j}){x_j^*}^2e^{rt} \nonumber\\&&\hskip 1.0in-\,x_k^*\left(c_k+\frac{1}{2}\rho_j^2Q_j(\varphi_{j,j}-\varphi_{j,k})x_k^*e^{rt}\right),\label{eq:sbvp1}\\
\dot{\varphi}_{k,k}&=&
\varphi_{k,k}\left(c_k+\frac{1}{2}\sigma_k^2Q_k\varphi_{k,k}(1-x_1^*-x_2^*)^2e^{rt} \right)+
\frac{1}{2}\sigma_j^2Q_j\varphi_{k,j}\varphi_{j,j}(1-x_1^*-x_2^*)e^{rt}
 \nonumber \\
&&\hskip 1.0in+\,\frac{1}{2}(\varphi_{k,k}-\varphi_{j,j})\rho_j^2Q_j(\varphi_{j,j}-\varphi_{j,k})x_j^*e^{rt}-e^{-rt},\label{eq:sbvp2}\\
\dot{\varphi}_{k,j}&=&
\frac{1}{2}\sigma_k^2Q_k\varphi_{k,k}^2(1-x_1^*-x_2^*)e^{rt}
-\frac{1}{2}\rho_k^2Q_k(\varphi_{k,k}-\varphi_{k,j})^2x_j^*e^{rt} \nonumber\\
&&\hskip 1.0in +\,
\varphi_{k,j}\left(c_j+\frac{1}{2}\sigma_j^2Q_j\varphi_{j,j}(1-x_1^*-x_2^*)e^{rt} \right),\label{eq:sbvp3}
\end{eqnarray}
where $x_k(0)=x_k^0$, $\varphi_{k,j}(T)=0$, $j,k=1,2$. 
Then 
\begin{equation}
u_k^*=\frac{1}{2}Q_k\rho_k(\varphi_{k,k}-\varphi_{k,j})x_je^{rt},\qquad
v_k^*=\frac{1}{2}Q_k\sigma_k\varphi_{k,k}(1-x_1-x_2)e^{rt},
\label{eq:scl}
\end{equation}
for $k, j=1, 2$, $j\ne k$, where the  $x_k$ satisfy \eqref{eq:snash1}--\eqref{eq:snash2},
form global closed-loop Nash equilibrium strategies for \eqref{eq:sgame1}--\eqref{eq:sgame2} (with $n=2$).
\end{theorem}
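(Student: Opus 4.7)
The plan is to extend the argument of Theorem~\ref{thm:nsnash} to the two-control setting. I would begin by writing firm $k$'s current value Hamiltonian
\[
H_k = Q_kx_k - u_k^2 - v_k^2 + \lambda_{k,1}\dot{x}_1 + \lambda_{k,2}\dot{x}_2,
\]
with $\dot{x}_i$ given by \eqref{eq:snash1}--\eqref{eq:snash2}. Setting $\partial H_k/\partial u_k = 0$ and $\partial H_k/\partial v_k = 0$ should yield, for $\{j\} = \{1,2\}\setminus\{k\}$,
\[
u_k = \tfrac{1}{2}\rho_k(\lambda_{k,k} - \lambda_{k,j})x_j, \qquad v_k = \tfrac{1}{2}\sigma_k\lambda_{k,k}(1 - x_1 - x_2),
\]
which matches \eqref{eq:scl} once the substitution $\varphi_{k,i} = e^{-rt}\lambda_{k,i}/Q_k$ is made. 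The costate equations $\dot{\lambda}_{k,i} = r\lambda_{k,i} - \partial H_k/\partial x_i$, together with transversality $\lambda_{k,i}(T) = 0$, then translate under the same change of variables into \eqref{eq:sbvp2}--\eqref{eq:sbvp3}, and substitution of $u_k^*$, $v_k^*$ into \eqref{eq:snash1}--\eqref{eq:snash2} gives \eqref{eq:sbvp1}. In contrast to the nontargeted model, where the off-diagonal costate $\lambda_{k,j}$ vanishes by \eqref{eq:nsgame6}--\eqref{eq:nsgame7}, here $x_j$ appears nontrivially in $\dot{x}_k$ through both the market-potential factor $(1 - x_1 - x_2)$ and the competitor-capture term $\rho_k u_k x_j$, so $\lambda_{k,j}\not\equiv 0$ in general; this is why the resulting two-point boundary value problem carries six rather than four equations.

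For the Nash equilibrium verification I would invoke the identity
\[
0 = Q_k\bigl(\varphi_{k,1}(0)x_1^0 + \varphi_{k,2}(0)x_2^0\bigr) + \int_0^T \frac{d}{dt}\bigl(Q_k\varphi_{k,1}x_1 + Q_k\varphi_{k,2}x_2\bigr)\,dt,
\]
which holds because $\varphi_{k,j}(T) = 0$ for $j = 1,2$. Adding this zero to $\Pi_k((u_1,v_1),(u_2,v_2))$ and expanding the total derivative using both state equations \eqref{eq:snash1}--\eqref{eq:snash2} and both costate equations \eqref{eq:sbvp2}--\eqref{eq:sbvp3}, then repeatedly invoking the definitions in \eqref{eq:scl}, should produce a representation of the form
\[
\Pi_k = \int_0^T \bigl[2u_ku_k^* - u_k^2 + 2v_kv_k^* - v_k^2\bigr]e^{-rt}\,dt + R_k(u_j, v_j; x^*, \varphi),
\]
in which $R_k$ is independent of $(u_k, v_k)$ once firm $j$'s controls are fixed. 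Freezing $(u_j, v_j) = (u_j^*, v_j^*)$ and subtracting $\Pi_k(u_k, v_k, \ldots)$ from $\Pi_k(u_k^*, v_k^*, \ldots)$ should then collapse $R_k$ and leave
\[
\int_0^T \bigl[(u_k^* - u_k)^2 + (v_k^* - v_k)^2\bigr]e^{-rt}\,dt \ge 0,
\]
delivering each of the four inequalities \eqref{eq:gne1}--\eqref{eq:gne4}.

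The main obstacle will be the algebraic bookkeeping in expanding $\tfrac{d}{dt}\bigl(Q_k\varphi_{k,1}x_1 + Q_k\varphi_{k,2}x_2\bigr)$ and verifying that the bilinear cross terms telescope into perfect squares for both the $u_k$ and $v_k$ deviations \emph{simultaneously}. Weighting by both $\varphi_{k,1}$ and $\varphi_{k,2}$, rather than only by the diagonal costate as in Theorem~\ref{thm:nsnash}, is essential here: it is precisely the combination $\varphi_{k,k} - \varphi_{k,j}$ that appears in $u_k^*$, and this combination must be recovered from the two pieces of the derivative. If that cancellation proceeds as expected, the $v_k$-square emerges from the $(1-x_1-x_2)$ term and the $u_k$-square from the $\rho_k u_k x_j$ term, so the scalar completion-of-squares argument of Theorem~\ref{thm:nsnash} doubles up cleanly in the targeted duopoly.
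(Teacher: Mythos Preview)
Your derivation of the first-order conditions, the costate equations, and the change of variables $\varphi_{k,i}=e^{-rt}\lambda_{k,i}/Q_k$ matches the paper exactly, including the observation that $\lambda_{k,j}\not\equiv 0$ here. The difference lies in the verification step. You propose to add the single zero
\[
Q_k\bigl(\varphi_{k,1}(0)x_1^0+\varphi_{k,2}(0)x_2^0\bigr)+\int_0^T\frac{d}{dt}\bigl(Q_k\varphi_{k,1}x_1+Q_k\varphi_{k,2}x_2\bigr)\,dt
\]
to $\Pi_k$ and extract both perfect squares at once. The paper instead treats the $u$- and $v$-deviations with two \emph{different} auxiliary functions: to establish \eqref{eq:gne1} it adds the zero built from $Q_1(\varphi_{1,1}-\varphi_{1,2})x_1$, producing only the $(u_1^*-u_1)^2$ square, and then separately, for \eqref{eq:gne2}, it adds the zero built from $Q_1\varphi_{1,1}x_1$ to obtain the $(v_1^*-v_1)^2$ square.

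Your route is the more canonical Hamilton--Jacobi verification and is in fact slightly stronger: because the linear-in-$u_k$ part of $Q_k(\varphi_{k,k}\dot x_k+\varphi_{k,j}\dot x_j)$ is exactly $Q_k\rho_k(\varphi_{k,k}-\varphi_{k,j})x_ju_k=2u_k^*u_ke^{-rt}$ and the linear-in-$v_k$ part is $Q_k\sigma_k\varphi_{k,k}(1-x_1-x_2)v_k=2v_k^*v_ke^{-rt}$, you obtain
\[
\Pi_k(u_k^*,v_k^*;\cdot)-\Pi_k(u_k,v_k;\cdot)=\int_0^T\bigl[(u_k^*-u_k)^2+(v_k^*-v_k)^2\bigr]e^{-rt}\,dt\ge 0,
\]
i.e.\ robustness to a \emph{joint} deviation in $(u_k,v_k)$, from which the four one-control inequalities \eqref{eq:gne1}--\eqref{eq:gne4} follow as special cases. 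The paper's two-auxiliary-function device avoids carrying the $\varphi_{k,j}\dot x_j$ term through the algebra, at the price of running the argument twice and yielding only the separate inequalities directly. Both approaches are valid; yours is cleaner and the bookkeeping you flag as the main obstacle does indeed telescope as you anticipate.
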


\begin{proof}
The current value Hamiltonians of firms $1$ and $2$ are given by
\begin{eqnarray}
H_1 &=& Q_1x_1-u_1^2-v_1^2 +\lambda_{1,1}\dot{x}_1+\lambda_{1,2}\dot{x}_2,\\
H_2 &=& Q_2x_2-u_2^2-v_2^2 +\lambda_{2,2}\dot{x}_2+\lambda_{2,1}\dot{x}_1.
\end{eqnarray}
where $\lambda_{k,j}$ are the costate variables.  Using \eqref{eq:snash1}--\eqref{eq:snash2} we have 
\begin{eqnarray}
-2u_1+\lambda_{1,1}\rho_1x_2-\lambda_{1,2}\rho_1x_2 =0\qquad &\Longrightarrow&\qquad u_1=\frac{1}{2}\rho_1(\lambda_{1,1}-\lambda_{1,2})x_2, \label{eq:u1def}\\
-2v_1+\lambda_{1,1}\sigma_1(1-x_1-x_2)=0\qquad &\Longrightarrow &\qquad v_1=\frac{1}{2}\sigma_1\lambda_{1,1}(1-x_1-x_2),\label{eq:v1def}\\
-2u_2+\lambda_{2,2}\rho_2x_1-\lambda_{2,1}\rho_2x_1 =0\qquad &\Longrightarrow&\qquad u_2=\frac{1}{2}\rho_2(\lambda_{2,2}-\lambda_{2,1})x_1, \label{eq:u2def}\\
-2v_2+\lambda_{2,2}\sigma_2(1-x_1-x_2)=0\qquad &\Longrightarrow &\qquad v_2=\frac{1}{2}\sigma_2\lambda_{2,2}(1-x_1-x_2).\label{eq:v2def}
\end{eqnarray}
The optimality conditions are then given by \eqref{eq:u1def}--\eqref{eq:v2def} and the four equations
\begin{eqnarray}
\dot{\lambda}_{1,1} &=&\lambda_{1,1}\left(r+c_1+\sigma_1 v_1+\rho_2u_2\right)+\lambda_{1,2}\left(\sigma_2v_2-\rho_2u_2\right)-Q_1,
\label{eq:l11dot}\\
\dot{\lambda}_{1,2} &=&\lambda_{1,1}\left(\sigma_1v_1-\rho_1u_1\right)+
\lambda_{1,2}\left(r+c_2+\sigma_2v_2+\rho_1u_1\right),\label{eq:l12dot}\\
\dot{\lambda}_{2,1} &=&\lambda_{2,2}\left(\sigma_2v_2-\rho_2u_2\right)+
\lambda_{2,1}\left(r+c_1+\sigma_1v_1+\rho_2u_2\right),
\label{eq:l21dot}\\
\dot{\lambda}_{2,2} &=&\lambda_{2,2}\left(r+c_2+\sigma_2 v_2+\rho_1u_1\right)+\lambda_{2,1}\left(\sigma_1v_1-\rho_1u_1\right)-Q_2,
\label{eq:l22dot}
\end{eqnarray}
and the transversality conditions imply that $\lambda_{k,j}(T)=0$ for all $k,j=1,2$.  Note that, as opposed to the proof of Theorem \ref{thm:nsnash}, the transversality conditions do not directly imply $\lambda_{1,2}\equiv 0$ and $\lambda_{2,1}\equiv 0$.  Make the substitution $\varphi_{k,j}(t)=\lambda_{k,j}(t)e^{-rt}/Q_k$, and the two-point boundary value problem \eqref{eq:sbvp1}--\eqref{eq:sbvp3} is then obtained.  Letting $x_1^*, x_2^*, \varphi_{1,1}, \varphi_{1,2}, \varphi_{2,1}, \varphi_{2,2}$ solve 
\eqref{eq:sbvp1}--\eqref{eq:sbvp3}, define the open-loop strategies
\begin{equation}
\uol=\frac{1}{2}Q_k\rho_kx_j^*(\varphi_{k,k}-\varphi_{k,j})e^{rt},\qquad
\vol=\frac{1}{2}Q_k\sigma_k\varphi_{k,k}(1-x_1^*-x_2^*)e^{rt}.
\label{eq:sol}
\end{equation}
and the closed-loop strategies \eqref{eq:scl}.  As in the proof of Theorem \ref{thm:nsnash}, the demonstration of the conditions \eqref{eq:gne1}--\eqref{eq:gne4} is accomplished by adding particular equations to $\Pi_k$.
First, note that
\begin{equation}
\int_0^T\frac{d}{dt}\left(Q_1(\varphi_{1,1}-\varphi_{1,2})x_1\right)\,dt+Q_1(\varphi_{1,1}(0)-\varphi_{1,2}(0))x_1^0=0.
\label{eq:add1}
\end{equation}
Then we have, using \eqref{eq:sbvp1}--\eqref{eq:sbvp3} and \eqref{eq:snash1}--\eqref{eq:snash2},
\begin{align}
\frac{d}{dt}&\left(Q_1(\varphi_{1,1}-\varphi_{1,2})x_1\right)=
Q_1(\dot{\varphi}_{1,1}-\dot{\varphi}_{1,2})x_1+Q_1(\varphi_{1,1}-\varphi_{1,2})\dot{x}_1\nonumber\\
&=Q_1 x_1\left(\varphi_{1,1}c_1-\varphi_{1,2}c_2+(\varphi_{1,1}-\varphi_{1,2})\rho_1\uo_1+  (\varphi_{1,1}-\varphi_{1,2})\rho_2\uo_2-e^{-rt} \right)     \nonumber\\
&+Q_1(\varphi_{1,1}-\varphi_{1,2})\left(\sigma_1v_1(1-x_1-x_2)+\rho_1u_1x_2-x_1(c_1+\rho_2u_2)\right)\nonumber\\
&=\rho_1Q_1(\varphi_{1,1}-\varphi_{1,2})x_2u_1+\hat{f}_{u_1}(\uo_1,v_1,\vo_1,u_2,\uo_2,\varphi_{1,1},\varphi_{1,2})-Q_1x_1e^{-rt}\nonumber\\
&=2u_1^*u_1e^{-rt}+\hat{f}_{u_1}(\uo_1,v_1,\vo_1,u_2,\uo_2,\varphi_{1,1},\varphi_{1,2})-Q_1x_1e^{-rt},\label{eq:add1a}
\end{align}
where $\hat{f}_{u_1}(\uo_1,v_1,\vo_1,u_2,\uo_2,\varphi_{1,1},\varphi_{1,2})$ is independent of $u_1$.  Then, adding the left hand side of \eqref{eq:add1} to $\Pi_1$ and using \eqref{eq:add1a}, we have for any admissible $u_1$,
\begin{eqnarray}
\Pi_1((u_1^*, v_1^*),((u_2^*, v_2^*)) &-& \Pi_1((u_1, v_1^*),((u_2^*, v_2^*))\nonumber\\&=&\int_0^T\left[Q_1x_1-{u_1^*}^2-{v_1^*}^2+2{u_1^*}^2+\hat{f}_{u_1}e^{rt}-Q_1x_1\right]e^{-rt}\,dt
\nonumber\\
&&-\int_0^T\left[Q_1x_1-{u_1}^2-{v_1^*}^2+2{u_1^*}u_1+\hat{f}_{u_1}e^{rt}-Q_1x_1\right]e^{-rt}\,dt\nonumber\\
&=&\int_{0}^T\left[{u_1^*}^2-2u_1^*u_1+u_1^2\right]e^{-rt}\,dt\nonumber\\
&=&\int_{0}^T\left(u_1^*-u_1\right)^2e^{-rt}\,dt \,\ge\, 0,\label{eq:add13}
\end{eqnarray}
which proves condition \eqref{eq:gne1}.  To prove \eqref{eq:gne2}, we use
\begin{equation}
\int_0^T\frac{d}{dt}\left(Q_1\varphi_{1,1}x_1\right)\,dt+Q_1\varphi_{1,1}(0)x_1^0=0.
\label{eq:add2}
\end{equation}
Then we have, using \eqref{eq:sbvp1}--\eqref{eq:sbvp3} and \eqref{eq:snash1}--\eqref{eq:snash2},
\begin{eqnarray}
\frac{d}{dt}\left(Q_1\varphi_{1,1}x_1\right)&=&
Q_1\dot{\varphi}_{1,1}x_1+Q_1\varphi_{1,1}\dot{x}_1\nonumber\\
&=&Q_1 x_1\left(\varphi_{1,1}\left(c_1+\sigma_1\vo_1+\rho_2\uo_2\right)
+\varphi_{1,2}\left(\sigma_2\vo_2-\rho_2\uo_2\right)-e^{-rt} \right)     \nonumber\\
&&+Q_1\varphi_{1,1}\left(\sigma_1v_1(1-x_1-x_2)+\rho_1u_1x_2-x_1(c_1+\rho_2u_2)\right)\nonumber\\
&=&\sigma_1Q_1\varphi_{1,1}(1-x_1-x_2)+\hat{f}_{v_1}(u_1,\vo_1,u_2,\uo_2,\vo_2,\varphi_{1,1},\varphi_{1,2})-Q_1x_1e^{-rt}\nonumber\\
&=&2v_1^*v_1e^{-rt}+\hat{f}_{v_1}(u_1,\vo_1,u_2,\uo_2,\vo_2,\varphi_{1,1},\varphi_{1,2})-Q_1x_1e^{-rt},\label{eq:add2a}
\end{eqnarray}
where $\hat{f}_{v_1}(u_1,\vo_1,u_2,\uo_2,\vo_2,\varphi_{1,1},\varphi_{1,2})$ is independent of $v_1$.  Proceeding as we did above, we see that \eqref{eq:add2} and \eqref{eq:add2a} imply, for any admissible $v_1$,
\begin{eqnarray}
\Pi_1((u_1^*, v_1^*),((u_2^*, v_2^*)) &-& \Pi_1((u_1^*, v_1),((u_2^*, v_2^*))\nonumber\\&=&\int_0^T\left[Q_1x_1-{u_1^*}^2-{v_1^*}^2+2{v_1^*}^2+\hat{f}_{v_1}e^{rt}-Q_1x_1\right]e^{-rt}\,dt
\nonumber\\
&&-\int_0^T\left[Q_1x_1-{u_1^*}^2-{v_1}^2+2{v_1^*}v_1+\hat{f}_{v_1}e^{rt}-Q_1x_1\right]e^{-rt}\,dt\nonumber\\
&=&\int_{0}^T\left[{v_1^*}^2-2v_1^*v_1+v_1^2\right]e^{-rt}\,dt\nonumber\\
&=&\int_{0}^T\left(v_1^*-v_1\right)^2e^{-rt}\,dt \,\ge\, 0,\label{eq:add23}
\end{eqnarray}
which proves condition \eqref{eq:gne2}.  Conditions \eqref{eq:gne3} and \eqref{eq:gne4} are shown in the same manner, completing the proof of Theorem \ref{thm:snash}.
\end{proof}

Again, computation of analytic solutions to the two-point boundary value problem in \eqref{eq:sbvp1}--\eqref{eq:sbvp3}  is generally infeasible.  In Figure \ref{fig:stp}, two different solutions are computed for the duopoly case.  As in the nontargeted simulations, the discount rate is taken to be $r=0.1$ and the profit rates $Q_1=Q_2=1$ and firm 2's effectiveness to effort ratio  towards competitors' customers is 20\% better than firm 1 ($\rho_1=1.0, \rho_2=1.2$).  Additionally, we assume that firm 1's campaign is more effective towards the market potential ($\sigma_1=1.2, \sigma_2=1.0$) and that both firms have an identical initial market share of 40\%.  The figure on the left is the solution for $c_1=c_2=0$, and the figure on the right is with $c_1=0.1, c_2=0.2$.  It is easy to see that in the absence of sales decay, the market potential tends to zero, while there is a substantial market potential in the case with nonzero sales decay.  As opposed to the nontargeted case, these two scenarios produce different firms in the market share lead.  In the absence of sales decay, initially firm 1 takes the lead as its campaign towards market potential is more effective, however as market potential shrinks, firm 2 overtakes the lead as its campaign towards customers of firm 1 is more effective, resulting in a leading long-run market share.  When sales decay is introduced however, firm 1 maintains the leading market share throughout the time horizon.  This leads to the observation that sales decay rates play an important role in the optimal strategies of a targeted advertising policy.   
\begin{figure}[ht]
\caption{Solutions to \eqref{eq:sbvp1}--\eqref{eq:sbvp3} and closed-loop strategies  \eqref{eq:scl}\label{fig:stp}}
{\subfigure[$c_1=c_2=0$]{%
\includegraphics[width=0.65\textwidth]{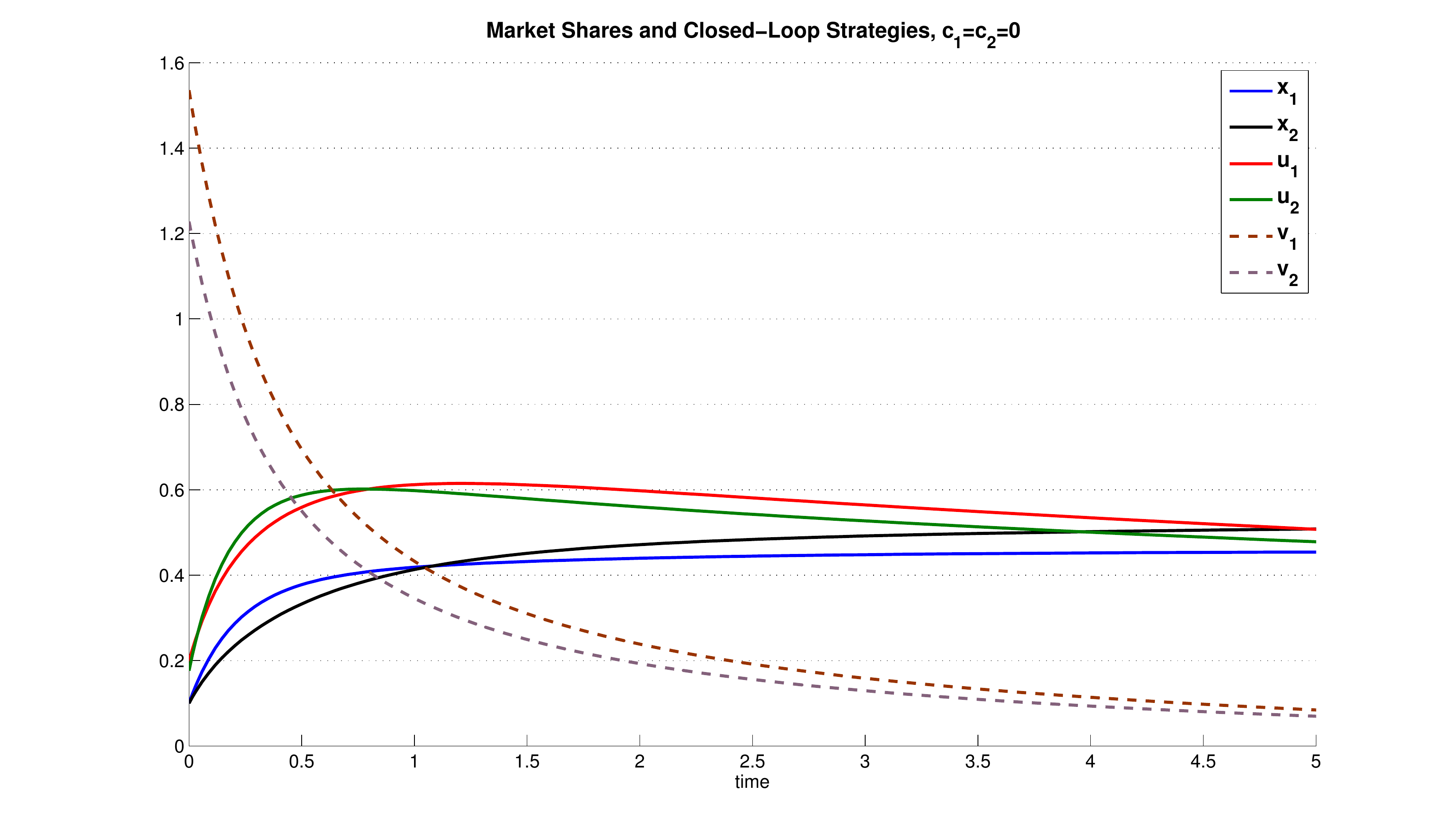}
\label{fig:nstp1}}
\quad
\subfigure[$c_1=0.1$, $c_2=0.2$]{%
\includegraphics[width=0.65\textwidth]{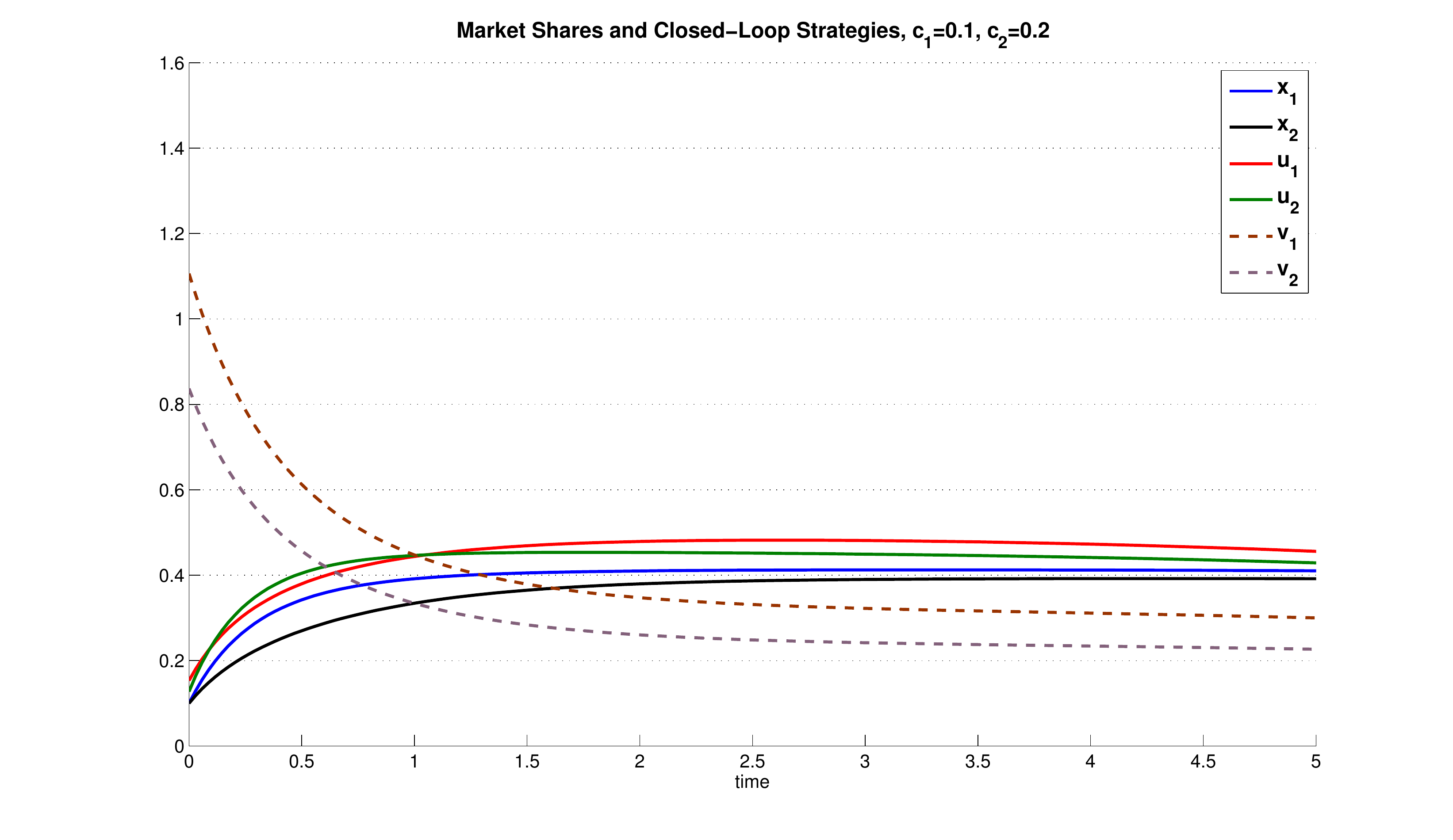}
\label{fig:nstp2}}}
\end{figure}

\subsection{Analysis of Steady States of the Targeted Model}\label{sec:ssteady}
The derivation of steady states for the targeted advertising model is significantly more complicated than the nontargeted case.  In addition to the assumptions in section \ref{sec:nssteady}, assume $\rho_k$ and $v_k$ are constant.  
The duopoly case is given by the equations
\begin{eqnarray}
\dot{s}_1&=&\sigma_1v_1m+\left(\rho_1u_1-\sigma_1v_1\right)s_2-s_1\left(c_1+\sigma_1v_1+\rho_2u_2\right),\label{eq:sduo1}\\
\dot{s}_2&=&\sigma_2v_2m+\left(\rho_2u_2-\sigma_2v_2\right)s_1-s_2\left(c_2+\sigma_2v_2+\rho_1u_1\right).\label{eq:sduo2}
\end{eqnarray}
The steady states are found by setting $\dot{s}_1=\dot{s}_2=0$ in \eqref{eq:sduo1}-\eqref{eq:sduo2}.  

\begin{lemma}\label{lem:s1}
Let $V_n=\sum_{k=1}^n \sigma_kv_k$ and 
\begin{equation}
D_2=\sigma_1v_1 \left(c_2+U_2 \right)  + \sigma_2v_2 \left(c_1+U_2 \right)  +c_1 \rho_1 u_1+ c_2  \rho_2 u_2+ c_1c_2. 
\label{eq:Ddef}
\end{equation}
Then the unique steady state of \eqref{eq:sduo1}-\eqref{eq:sduo2} with constant parameters is given by
\begin{equation}
s_1 = \frac{m\left( V_2 \rho_1 u_1+ c_2 \sigma_1 v_1\right)}{D_2},  \qquad
s_2 = \frac{m\left( V_2 \rho_2 u_2+ c_1 \sigma_2 v_2\right)}{D_2}  \label{eq:seq}.
\end{equation}
\end{lemma}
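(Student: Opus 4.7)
The plan is to observe that setting $\dot{s}_1=\dot{s}_2=0$ in \eqref{eq:sduo1}--\eqref{eq:sduo2} yields, under the constant-parameter assumption, a $2\times 2$ \emph{linear} system in the unknowns $s_1,s_2$. Writing it in matrix form, the coefficient matrix is
\begin{equation*}
M=\begin{pmatrix} c_1+\sigma_1 v_1+\rho_2 u_2 & \sigma_1 v_1-\rho_1 u_1 \\ \sigma_2 v_2-\rho_2 u_2 & c_2+\sigma_2 v_2+\rho_1 u_1 \end{pmatrix},
\end{equation*}
with right-hand side $(\sigma_1 v_1 m,\sigma_2 v_2 m)^\top$. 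Uniqueness then reduces to checking $\det M\ne 0$, and the closed-form expressions in \eqref{eq:seq} follow from Cramer's rule.

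First I would expand $\det M$. The diagonal product $(c_1+\sigma_1 v_1+\rho_2 u_2)(c_2+\sigma_2 v_2+\rho_1 u_1)$ and the off-diagonal product $(\sigma_1 v_1-\rho_1 u_1)(\sigma_2 v_2-\rho_2 u_2)$ share the single cross term $\sigma_1 v_1\sigma_2 v_2$ and the term $\rho_1 u_1\rho_2 u_2$ (with opposite signs), so their difference collapses neatly. Grouping by factors of $\sigma_1 v_1$, $\sigma_2 v_2$, $c_1\rho_1u_1$, $c_2\rho_2u_2$, and $c_1c_2$, and using $U_2=\rho_1 u_1+\rho_2 u_2$ as in \eqref{eq:Udef}, the determinant becomes exactly the quantity $D_2$ defined in \eqref{eq:Ddef}. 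Since all parameters $u_k,v_k,\rho_k,\sigma_k,c_k$ are positive, every summand of $D_2$ is nonnegative, and terms such as $\sigma_1 v_1(c_2+U_2)$ are strictly positive, so $D_2>0$; this both certifies invertibility of $M$ and guarantees uniqueness.

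Next, by Cramer's rule,
\begin{equation*}
s_1=\frac{m}{D_2}\bigl[\sigma_1 v_1(c_2+\sigma_2 v_2+\rho_1 u_1)-(\sigma_1 v_1-\rho_1 u_1)\sigma_2 v_2\bigr].
\end{equation*}
The $\sigma_1 v_1\sigma_2 v_2$ terms cancel, leaving $\sigma_1 v_1 c_2+\rho_1 u_1(\sigma_1 v_1+\sigma_2 v_2)=c_2\sigma_1 v_1+V_2\rho_1 u_1$, which is the stated expression for $s_1$. The formula for $s_2$ follows by the symmetric computation (swapping indices $1\leftrightarrow 2$).

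The main obstacle, such as it is, is purely bookkeeping: verifying that the nine terms produced by the diagonal product of $M$ minus the four terms of the off-diagonal product reassemble to the grouped form \eqref{eq:Ddef}. Beyond that cancellation of the $\sigma_1 v_1 \sigma_2 v_2$ cross term and the tidy regrouping into $V_2$, no deeper idea is needed — the result is simply the unique solution of a nondegenerate linear $2\times 2$ system arising from setting the right-hand sides of the targeted duopoly dynamics to zero.
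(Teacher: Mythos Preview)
Your proof is correct and is precisely the computation the paper has in mind; the paper itself does not spell out a proof, treating the result as a routine linear-algebra exercise obtained by setting $\dot s_1=\dot s_2=0$ and solving the resulting $2\times 2$ system. Your verification that $\det M=D_2>0$ and the Cramer's-rule simplification to $s_k=m(V_2\rho_k u_k+c_j\sigma_k v_k)/D_2$ are exactly right.
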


Examining the numerator of $s_1$ in \eqref{eq:seq} we see that
\begin{equation}
m\left( V_2 \rho_1 u_1+ c_2 \sigma_1 v_1\right)=m\left(\sigma_1v_1\left(\rho_1u_1+c_2\right)+\rho_1u_1\sigma_2v_2\right).
\label{eq:enum}
\end{equation}
Both sides of \eqref{eq:enum} give interpretations of the contributions firm 1's long term sales behavior.  The left side of \eqref{eq:enum} indicates that there is a contribution from firm 1's effectiveness towards customers of firm 2 times the sum of the effectiveness of both firms' campaigns towards market potential ($mV_2 \rho_1 u_1$), as well as a contribution from firm 1's effectiveness towards market potential times firm 2's cancel rate ($mc_2\sigma_1 v_1$).
The right hand side of \eqref{eq:enum} indicates that there is a contribution from the effectiveness towards market potential ($\sigma_1v_1$) times the sum of its effectiveness towards firm 2's customers ($\rho_1u_1$) and firm 2's cancellation rate ($c_2$), as well as its effectiveness towards firm 2's customers times the firm 2's effectiveness towards market potential, all multiplied by market size $m$.  


It should also be noted that for the special case $\rho_k=\sigma_k$ and $u_k=v_k$ ($k=1,2$) the equilibrium point \eqref{eq:seq} reduces to the nontargeted equilibrium \eqref{eq:nseq1}.

\begin{lemma}\label{lem:s2}
The equilibrium point \eqref{eq:seq} is stable.
\end{lemma}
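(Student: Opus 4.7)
The strategy is to apply the standard linearization/trace–determinant criterion for planar autonomous systems. Because all the parameters $\rho_k,\sigma_k,u_k,v_k,c_k$ are assumed constant in this section, the right-hand sides of \eqref{eq:sduo1}--\eqref{eq:sduo2} are affine in $(s_1,s_2)$, and the Jacobian $J$ evaluated at the equilibrium \eqref{eq:seq} is in fact the constant matrix
\[
J=\begin{pmatrix}-(c_1+\sigma_1v_1+\rho_2u_2) & \rho_1u_1-\sigma_1v_1\\[2pt]\rho_2u_2-\sigma_2v_2 & -(c_2+\sigma_2v_2+\rho_1u_1)\end{pmatrix}.
\]
Thus the linearized system at \eqref{eq:seq} coincides with the full system (up to translation), and asymptotic stability follows as soon as both eigenvalues of $J$ have negative real parts.

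The first step is to compute the trace:
\[
\operatorname{tr}(J)=-\bigl(c_1+c_2+\sigma_1v_1+\sigma_2v_2+\rho_1u_1+\rho_2u_2\bigr),
\]
which is strictly negative since every parameter is positive. The second, and main, step is to compute the determinant and show it is positive. Expanding and cancelling the cross term $\sigma_1v_1\sigma_2v_2-\sigma_1v_1\sigma_2v_2$ as well as $\rho_1u_1\rho_2u_2-\rho_1u_1\rho_2u_2$, I expect
\[
\det(J)=c_1c_2+c_1\rho_1u_1+c_2\rho_2u_2+\sigma_1v_1(c_2+\rho_1u_1+\rho_2u_2)+\sigma_2v_2(c_1+\rho_1u_1+\rho_2u_2),
\]
which, writing $U_2=\rho_1u_1+\rho_2u_2$ as in \eqref{eq:Udef}, is exactly the quantity $D_2$ from \eqref{eq:Ddef}. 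Since all summands are positive, $\det(J)=D_2>0$. This identification also provides a satisfying consistency check: the positivity of the denominator in the equilibrium formula \eqref{eq:seq} is the same condition that drives stability.

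With $\operatorname{tr}(J)<0$ and $\det(J)>0$, the two eigenvalues of $J$ satisfy $\lambda_1+\lambda_2<0$ and $\lambda_1\lambda_2>0$; whether they are both real or form a complex conjugate pair (decided by the sign of $\operatorname{tr}(J)^2-4\det(J)$), their real parts are both negative. By the Hartman–Grobman theorem, or simply because the system is already linear in $(s_1,s_2)$, the equilibrium \eqref{eq:seq} is asymptotically stable, which gives the claim.

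The main obstacle, such as it is, is purely bookkeeping: carrying out the determinant expansion carefully so that the off-diagonal product $(\rho_1u_1-\sigma_1v_1)(\rho_2u_2-\sigma_2v_2)$ cancels against the corresponding cross terms in $J_{11}J_{22}$ and leaves only manifestly nonnegative contributions. Once this algebra is arranged to recover $D_2$, the conclusion is immediate from the standard planar trace–determinant criterion.
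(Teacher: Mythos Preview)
Your argument is correct and slightly cleaner than the paper's. Both proofs compute the same Jacobian, but the paper proceeds by writing the eigenvalues explicitly via the quadratic formula, $\lambda_{\pm}=-\tfrac{1}{2}\bigl(c_1+c_2+U_2+V_2\pm\sqrt{d_*}\bigr)$, and then argues that $|d_*|<(c_1+c_2+U_2+V_2)^2$ by rewriting $d_*$ as $(\operatorname{tr}J)^2$ minus a positive quantity. You instead invoke the trace--determinant criterion directly, which is equivalent but avoids handling the discriminant. Your identification $\det(J)=D_2$ is a genuine bonus: it ties the stability condition to the denominator of the equilibrium formula \eqref{eq:seq}, a connection the paper does not make explicit. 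Either route establishes asymptotic stability; yours is shorter and carries a bit more structural insight.
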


\begin{proof}
The Jacobian of \eqref{eq:sduo1}-\eqref{eq:sduo2} is given by
\begin{equation}
J=\begin{bmatrix}
-\left(c_1+\sigma_1v_1+\rho_2u_2\right) & \left(\rho_1u_1-\sigma_1v_1\right)\\
\left(\rho_2u_2-\sigma_2v_2\right)&-\left(c_2(t)+\sigma_2v_2+\rho_1u_1\right)
\end{bmatrix},
\label{eq:sjac}
\end{equation}
which has eigenvalues
\begin{equation}
\lambda_{\pm} = -\frac{1}{2}\left(c_1+c_2+U_2+V_2\pm\sqrt{d_*}\right),
\label{eq:seig}
\end{equation}
where 
\begin{equation}
d_* = (c_1-c_2)^2-2(c_1-c_2)(\rho_1u_1-\rho_2v_2-\sigma_1v_1+\sigma_2v_2)+(U_2-V_2)^2.
\label{eq:dstar}
\end{equation}
It is clear that the real part of $\lambda_{+}$ is negative.  
To demonstrate that the real part of $\lambda_{-}$ is negative, note that 
\[d_* = (c_1+c_2+U_2+V_2)^2 -\left[ 4c_1c_2+c_1(\rho_1u_1+\sigma_2v_2)+c_2(\rho_2u_2+\sigma_1v_1)+U_2V_2\right].\]
Since $0\le |d_*| < (c_1+c_2+U_2+V_2)^2$, $\Re(\sqrt{d_*})<c_1+c_2+U_2+V_2$ and therefore $\Re(\lambda_{-})<0$.  Thus the equilibrium point \eqref{eq:seq} is stable.
\end{proof}

In the duopoly case, \eqref{eq:seq} implies that the equilibrium market potential is
\begin{equation}
\varepsilon = \frac{m(c_1\rho_1u_1+c_2\rho_2u_2+c_1c_2)}{D_2}.
\label{eq:seqpot}
\end{equation}
The case for $n=3$ is more complicated but the equilibrium point can be described.  
Define
\begin{equation}
D_3=\prod_{k=1}^3 c_k +\sum_{k=1}^3\left[
\sigma_kv_k\prod_{\substack{j\ne k}}^3\left(c_j+U_3\right)
+\rho_ku_kc_k\left(U_3+\DS\sum_{\substack{j\ne k}}^3 c_k\right)
\right].
\label{eq:seq3}
\end{equation}
Then the equilibrium solution is given by
\[s_1=  \left(\rho_1u_1\left(U_3V_3+\sigma_1v_1(c_2+c_3)+\sigma_2v_2c_3+\sigma_3v_3c_2\right)+\sigma_1v_1\left(\DS\sum_{\substack{j\ne k}}^3\rho_ku_kc_k+c_2c_3\right)\right)/D_3,\]
with $s_2$ and $s_3$ given similarly (for example, $s_2$ is found by exchanging all terms with subscripts of $1$ to terms with subscripts of $2$).



\section{Targeted Advertising Strategies Under a Fixed Budget}\label{sec:app}
In this section we discuss the application of the models presented in Sections \ref{sec:ns} and \ref{sec:s} to answer various strategic questions a firm may face when information identifying customers of competing firms (or the market potential) and competitors' behavior is available. 
While the  open-loop and closed-loop solutions found in Sections \ref{sec:nsnash} and \ref{sec:snash} give optimal strategies when all information about competitors' advertising policies and sales decay due to cancellation are known, 
it is often the case that firms must operate on limited information and/or in a very short time horizon.  Additionally, those open-loop and closed-loop strategies are computed in the absence of a fixed budget for advertising expenditures (as all positive $u_k$ and $v_k$ are considered to be admissible controls - a fixed advertising budget would remove these controls from the objective function).  In these cases a firm's strategy may be determined by the current best course of action based on analyzing the targeted and non-targeted models and their steady states.  

Additionally, the models here can be applied to situations in which the primary objective is to simply dominate the market share, maximize sales, or maximize rate of sales increase. For example, in the political candidate/election setting, the objective of each candidate is to have a larger market (voter) share at the time of the election.  In these cases the differential games formed by \eqref{eq:nsgame1}--\eqref{eq:nsgame2} and \eqref{eq:sgame1}--\eqref{eq:sgame2} are not as relevant and further analysis of the models are required.

For the remainder of this section, we assume that firm 1 has a constant budget amount $B$ for expenditures, which implies that effort controls $u_1$ and $v_1$ satisfy $u_1^2+v_1^2=B$, or $v_1=\sqrt{B-u_1^2}$.  When $B=1$, the control $u_1$ represents the square root of the portion of the budget that 
is allocated towards competitors' customers.

\subsection{Maximizing Rate of Increase of Sales Rate/Market Share.}
A firm or political candidate that gains the ability to discern the customers/supporters of their competitors from the market potential may wish to immediately implement an advertising strategy that will increase sales as quickly as possible in a short time window. We describe the best strategy for initial allocation towards competitors' customers to maximize the current rate of increase of sales rate or market share.
\begin{quote}
{\bf Question 1:} Given $n-1$ competitors with nontargeted or targeted advertising policies, which initial allocation of effort maximizes the instantaneous rate of sales increase for firm $k$?
\end{quote}  
The objective for firm $k$ is to maximize 
\begin{equation}
\dot{s}_k=m\sigma_k\sqrt{1-u_k^2}+\left(\rho_k u_k-\sigma_k\sqrt{1-u_k^2}\right)\sum_{\substack{j=1\\j\ne k}}^ns_j  -s_k\left(c_k+\sigma_k\sqrt{1-u_k^2}+\sum_{\substack{j=1\\j\ne k}}^n\rho_j u_j\right).
\label{eq:q51}
\end{equation}
with respect to $u_k$.
Note that firm $j$'s cancellation rates and efforts towards market potential do not affect $\dot{s}_k$ for all $j\ne k$.  Define $N$ to be the totality of the market that firm $k$ does not hold, i.e., 
\begin{equation}
N = m-s_k,
\label{eq:Ndef}
\end{equation}
and define $X$ to be the proportion of $N$ held by the competitors of firm $k$, i.e.,
\begin{equation}
X=\frac{1}{N}\left(\sum_{\substack{j\ne k}}^ns_j\right)=\frac{m-\varepsilon-s_k}{N}=\frac{N-\varepsilon}{N}.
\label{eq:Xdef}
\end{equation}
Then the allocation of effort that maximizes the rate of sales increase is given in the following theorem.

\begin{theorem}\label{thm:q5}
Let firm $k$ practice targeted advertising and assume $u_k^2+v_k^2=1$.
Then the instantaneous rate of sales increase for firm $k$ is maximized by the allocation
\begin{equation}
u_k=\frac{\rho_kX}{\sqrt{\sigma_k^2(1-X)^2+\rho_k^2X^2}},
\label{eq:q53}
\end{equation}
where $X$ is defined as in \eqref{eq:Xdef}. 
\end{theorem}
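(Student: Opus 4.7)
The plan is to reduce the maximization of \eqref{eq:q51} over $u_k\in[0,1]$ (with $v_k=\sqrt{1-u_k^2}$) to a one-dimensional optimization of an elementary function of $u_k$, and then solve it with single-variable calculus. The key simplification is to group all terms that depend on $u_k$ and recognize that the remaining terms are constants for the purposes of this maximization.

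First I would substitute $v_k=\sqrt{1-u_k^2}$ into \eqref{eq:q51} and expand the product $(\rho_ku_k-\sigma_k\sqrt{1-u_k^2})\sum_{j\ne k}s_j$. Collecting the coefficients of $\sqrt{1-u_k^2}$ gives $\sigma_k\bigl(m-\sum_{j\ne k}s_j - s_k\bigr)\sqrt{1-u_k^2}=\sigma_k\varepsilon\sqrt{1-u_k^2}$ by \eqref{eq:mp}. The remaining $u_k$-dependent term is $\rho_k u_k\sum_{j\ne k}s_j$, while $-s_k c_k - s_k\sum_{j\ne k}\rho_ju_j$ is independent of $u_k$. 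Using the substitutions \eqref{eq:Ndef} and \eqref{eq:Xdef}, namely $\sum_{j\ne k}s_j=NX$ and $\varepsilon=N(1-X)$, the problem reduces to maximizing
\begin{equation*}
f(u_k)=\sigma_k N(1-X)\sqrt{1-u_k^2}+\rho_k N X\,u_k
\end{equation*}
over $u_k\in[0,1]$.

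Next I would take the derivative,
\begin{equation*}
f'(u_k)=\rho_k N X-\sigma_k N(1-X)\frac{u_k}{\sqrt{1-u_k^2}},
\end{equation*}
set it to zero, and solve. This gives $u_k/\sqrt{1-u_k^2}=\rho_k X/\bigl(\sigma_k(1-X)\bigr)$; squaring, clearing denominators, and isolating $u_k$ yields exactly \eqref{eq:q53}. To confirm this is a maximizer rather than a minimizer or boundary optimum, I would note that $u_k\mapsto\sqrt{1-u_k^2}$ is strictly concave on $[0,1]$ and $u_k\mapsto u_k$ is linear, so $f$ is strictly concave (since $\sigma_k N(1-X)\ge 0$) and the interior critical point is the unique global maximum on $[0,1]$.

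The calculation is essentially routine once the expression for $\dot s_k$ is reorganized, so the only real obstacle is the bookkeeping in the first step: correctly identifying the cancellation $m-\sum_j s_j=\varepsilon$ and verifying that neither firm $k$'s cancellation rate $c_k$ nor the competitors' efforts $\rho_ju_j$ for $j\ne k$ influence the optimal $u_k$. After that, the explicit form \eqref{eq:q53} follows from elementary algebra and the concavity argument above guarantees optimality.
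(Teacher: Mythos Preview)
Your proposal is correct and follows essentially the same approach as the paper: reduce to a one-variable optimization in $u_k$ and apply elementary calculus. The only cosmetic differences are that you simplify $\dot s_k$ before differentiating (the paper differentiates first and simplifies afterward) and you invoke strict concavity of $f$ rather than the equivalent second-derivative test used in the paper.
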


\begin{proof}  Differentiating \eqref{eq:q51} with respect to $u_k$ gives
\[\frac{\partial \dot{s}_k}{\partial u_k} = \frac{-\sigma_k u_km+\sigma_k(m-\varepsilon-s_k)\sqrt{1-u_k^2}+\rho_k(m-\varepsilon-s_k)u_k+s_k\sigma_ku_k}{\sqrt{1-u_k^2}}.\]  
We have
\[\frac{\partial \dot{s}_k}{\partial u_k} =0\qquad\Longrightarrow\qquad u_k=\frac{\rho_k(m-\varepsilon-s_k)}{\sqrt{\sigma_k^2\varepsilon^2+\rho_k^2(m-\varepsilon-s_k)^2}},\]
and since
\[\frac{\partial^2\dot{s}_k}{\partial u_k^2}=\frac{-\sigma_k\varepsilon}{(1-u_k^2)^{3/2}},\]
the second derivative test indicates that $\dot{s}_k$ is maximized there.  Letting $N=m-s_k$, we have that the optimal $u_1$ is
\[u_1=\frac{\rho_k(N-\varepsilon)}{\sqrt{\sigma_k^2\varepsilon^2+\rho_k^2(N-\varepsilon)^2}}.\]
Dividing through by $N$ and letting $X=(N-\varepsilon)/N$ so that $\varepsilon/N=1-X$ we have
\[u_1=\frac{\rho_kX}{\sqrt{\sigma_k^2(1-X)^2+\rho_k^2X^2}}.\]
\end{proof}

Theorem \ref{thm:q5} is useful in that it gives a simple formulation that allows firm $k$ to determine their best course of action at the current moment, and all it requires is knowledge of its competitors'  proportion ($X$) of the market not occupied by firm $k$ ($N$).
Dividing through by $\sigma_k$, \eqref{eq:q53} can be written as
\begin{equation}
u_k=\frac{(\rho_k/\sigma_k)X}{\sqrt{(1-X)^2+(\rho_k/\sigma_k)^2X^2}}.
\label{eq:q54}
\end{equation}
Plots of the optimal choice of $u_k$ versus competitors' non-firm $k$ market share $X$ for several choices of $\rho_k/\sigma_k$ are given in Figure \ref{fig:q5}.  Because the horizontal axis is scaled to represent the proportion of the non-firm $k$ market, for a given $\rho_k/\sigma_k$, all curves have the same profile for any competitors' share of the total market (the profile of the curves is independent of the size of the non-firm $k$ market $N$).
\begin{figure}[ht]
\caption{Values of $u_k^2$ from \eqref{eq:q54} that Maximize $\dot{s}_k$ for varying $\rho_k/\sigma_k$ \label{fig:q5}}
\includegraphics[scale=1]{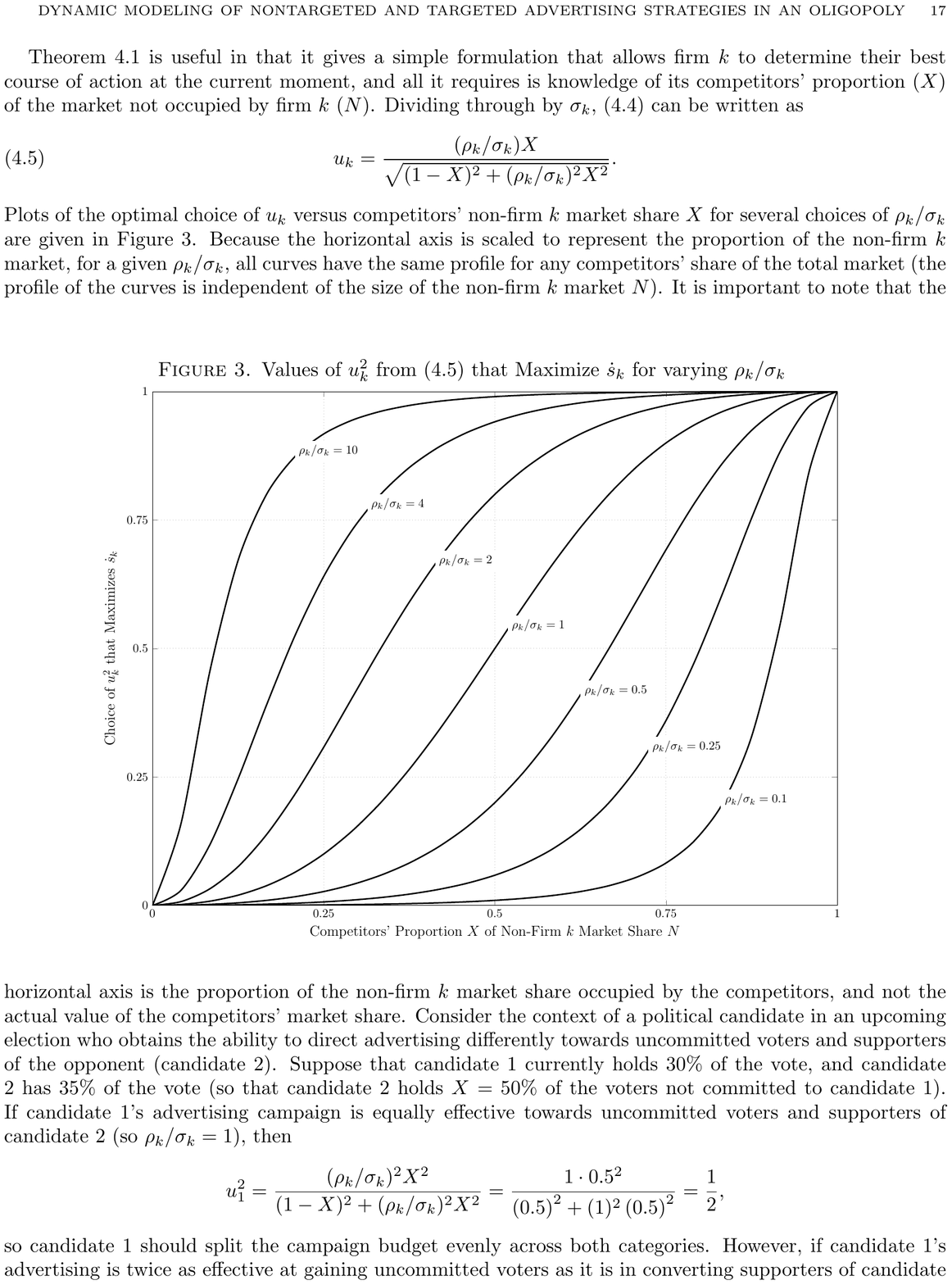}
\end{figure}
It is important to note that the horizontal axis is the proportion of the non-firm $k$ market share occupied by the competitors, and not the actual value of the competitors' market share.  Consider the context of a political candidate in an upcoming election who obtains the ability to direct advertising differently towards uncommitted voters and supporters of the opponent (candidate 2). Suppose that candidate 1 currently holds 30\% of the vote, and candidate 2 has 35\% of the vote (so that candidate 2 holds $X=50\%$ of the voters not committed to candidate 1).  If candidate 1's advertising campaign is equally effective towards uncommitted voters and supporters of candidate 2 (so $\rho_k/\sigma_k=1$), then 
\[ u_1^2=\frac{(\rho_k/\sigma_k)^2X^2}{{(1-X)^2+(\rho_k/\sigma_k)^2X^2}}=\frac{1\cdot 0.5^2}{{\left(0.5\right)^2+(1)^2\left(0.5\right)^2}}=\frac{1}{2},\]
so candidate 1 should split the campaign budget evenly across both categories.  However, if candidate 1's advertising is twice as effective at gaining uncommitted voters as it is in converting supporters of candidate 2 (so $\rho_k/\sigma_k=1/2=0.5$), then 
\[ u_1^2=\frac{(\rho_k/\sigma_k)^2X^2}{{(1-X)^2+(\rho_k/\sigma_k)^2X^2}}=\frac{0.25\cdot 0.5^2}{{\left(0.5\right)^2+0.25\left(0.5\right)^2}}=\frac{1}{5},\]
so candidate 1 should allocate only 20\% of the campaign budget towards uncommitted voters.

Alternatively, if candidates 1 and 2 are both tied with 40\% of the vote each, then candidate 2 holds $X=2/3$ of the market of voters not currently supporting candidate 1.  In this case, the above scenarios of $\rho_k/\sigma_k=1$ and $\rho_k/\sigma_k=1/2$ give optimal budget allocations of $u_1^2=80\%$ and $u_1^2=50\%$, respectively - drastically different strategies than the 30\%-35\% scenario above.   The proposition below summarizes the utility of this result.

\begin{proposition}
Under a fixed budget, the allocation of advertising effort that optimizes the rate of firm $k$'s sales increase is determined only by firm $k$'s effort to effectiveness ratios and the proportion of the available market held by firm $k$'s competitors, and is independent of sales decay rates, competitors' strategies, efforts, and effectiveness. 
\end{proposition}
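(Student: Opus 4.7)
The plan is to obtain this proposition as a direct corollary of Theorem~\ref{thm:q5}; no further calculation is required, only an inspection of the closed-form expression already derived for the optimal $u_k$.

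First, I would rewrite the optimal allocation in its normalized form \eqref{eq:q54},
\begin{equation*}
u_k \;=\; \frac{(\rho_k/\sigma_k)\,X}{\sqrt{(1-X)^2+(\rho_k/\sigma_k)^2 X^2}},
\end{equation*}
which isolates the two quantities on which the right-hand side depends: the effectiveness-to-effort ratio $\rho_k/\sigma_k$ of firm $k$ across its two submarkets, and the competitor share $X$ of the non-firm-$k$ market. Second, I would enumerate the parameters that are \emph{not} present in this expression: firm $k$'s own decay rate $c_k$; the decay rates $c_j$, efforts $u_j,v_j$, and effectiveness parameters $\rho_j,\sigma_j$ of each competitor $j\ne k$; and the individual market shares $s_j$. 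Third, I would observe that although $X=(N-\varepsilon)/N$ is affected by the competitors' current aggregate position, it is a single observable quantity obtainable from firm $k$'s own market share $s_k$ together with the overall market potential $\varepsilon$, with no disaggregation by competitor required. These three observations together are precisely the content of the proposition.

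The only point that warrants comment, rather than a true obstacle, is why the many parameters appearing in \eqref{eq:q51} drop out upon optimization. I would note that in the expression for $\dot{s}_k$, the cancellation rate $c_k$ enters only through the $u_k$-independent term $-c_k s_k$, and the competitors' controls $\rho_j u_j$ (for $j\ne k$) enter only through the $u_k$-independent term $-s_k\sum_{j\ne k}\rho_j u_j$; likewise, the competitors' sales enter only through the aggregate $\sum_{j\ne k}s_j = N-\varepsilon$. Consequently $\partial\dot{s}_k/\partial u_k$ is determined entirely by $\rho_k$, $\sigma_k$, $m$, $\varepsilon$, and $s_k$, and upon dividing by $N$ only the two invariant quantities $\rho_k/\sigma_k$ and $X$ survive, which completes the proof.
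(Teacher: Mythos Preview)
Your proposal is correct and matches the paper's approach: the proposition is stated there as a summary of Theorem~\ref{thm:q5} with no separate proof, so reading it off from the closed-form \eqref{eq:q54} and noting which parameters are absent is exactly what is intended. Your additional remark explaining \emph{why} the extraneous parameters disappear from $\partial\dot s_k/\partial u_k$ (because $c_k$ and the $\rho_j u_j$ for $j\ne k$ enter $\dot s_k$ only through additive terms independent of $u_k$) is a nice clarification that the paper leaves implicit.
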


\subsection{Targeted Advertising Strategies in an Oligopoly With Nontargeted Competitors}
When it is known that competing firms in an oligopoly practice nontargeted advertising, a firm that gains the ability to discern the competitors' customer base from the market potential should determine the conditions under which a targeted advertising policy will lead to an increased market share.  We assume that all firms' sales are at a steady state when the following questions are considered.   In this case we have $\sigma_2=\rho_2$ and $v_2=u_2$. With the assumption that $v_2^2+u_2^2=1$, we have that $v_2=u_2=1/\sqrt{2}$.

\begin{quote}
{\bf Question 2:} Given a single competitor with a nontargeted advertising policy, which allocation of effort will maximize steady state market share?
\end{quote}
 If expenditures, efforts, and cancellation rates are all constant, then the equilibrium sales for firms 1 and 2 are
\begin{equation}
s_1=\frac { m\left( { \sigma_1} \left( { \rho_1}\,{ u_1}+{ 
c_2} \right) \sqrt {B-{{ u_1}}^{2}}+{ \rho_1}{ \rho_2}
{ u_1}{ u_2} \right) }{ \left( { \rho_1}{ u_1}+{
 \rho_2}{ u_2}+{ c_2} \right)  \left( { \sigma_1}\sqrt {B-{{ u_1}
}^{2}}+{ \rho_2}{ u_2}+{ c_1} \right) }
\label{eq:app1b}
\end{equation}
and 
\begin{equation}
s_2=\frac {\rho_2u_2}{\rho_1u_1+\rho_2u_2+c_2}.
\label{eq:app2a}
\end{equation}
To determine the best policy for firm 1, $s_1$ is maximized with respect to $u_1$.  Assume for simplicity that $\rho_1=\rho_2=\sigma_1=1$ and $m=1$.  Then we have
\begin{equation}
s_1=\frac {\frac{1}{\sqrt{2}}u_1+ \left( { c_2}+{ u_1} \right) \sqrt {1-{{ u_1}}^{2}
}}{ \left( \frac{1}{\sqrt{2}}+{ c_2}+{ u_1}
 \right)  \left( \frac{1}{\sqrt{2}}+c_1+\sqrt {1-{{ u_1}}^{2}}
 \right) }.
\label{eq:s1vsu1}
\end{equation}
Given particular values for the cancellation rates $c_1$ and $c_2$, \eqref{eq:s1vsu1} can be maximized with respect to $u_1$ to determine the optimal allocation of effort. 
Values of $s_1$ as a function of $u_1$ for different values of $c_1$ (with $c_2=1$) and $c_2$ (with $c_1=1$) are given in Figure \ref{fig:s1c}, in which the red dot indicates the maximum of $s_1$.  In Figure \ref{fig:s1c}(a) we see that as $c_1$ increases, market share is maximized by decreasing $u_1$ (the effort towards customers of the competitor), which corresponds to an increase in effort towards the market potential.  Interestingly, Figure  \ref{fig:s1c}(b) indicates similar behavior - as firm 2's cancellation rate increases, firm 1 should decrease the effort $u_1$ towards the customers of firm 2 (thereby concentrating more effort towards the market potential) in order to maximize the market share.
\begin{figure}[ht]
\caption{Dependence of $s_1$ on $c_1$ and $c_2$ in \eqref{eq:s1vsu1}\label{fig:s1c}}
{\subfigure[Varying $c_1$ ($c_2=0.1$)]{%
\includegraphics[width=0.65\textwidth]{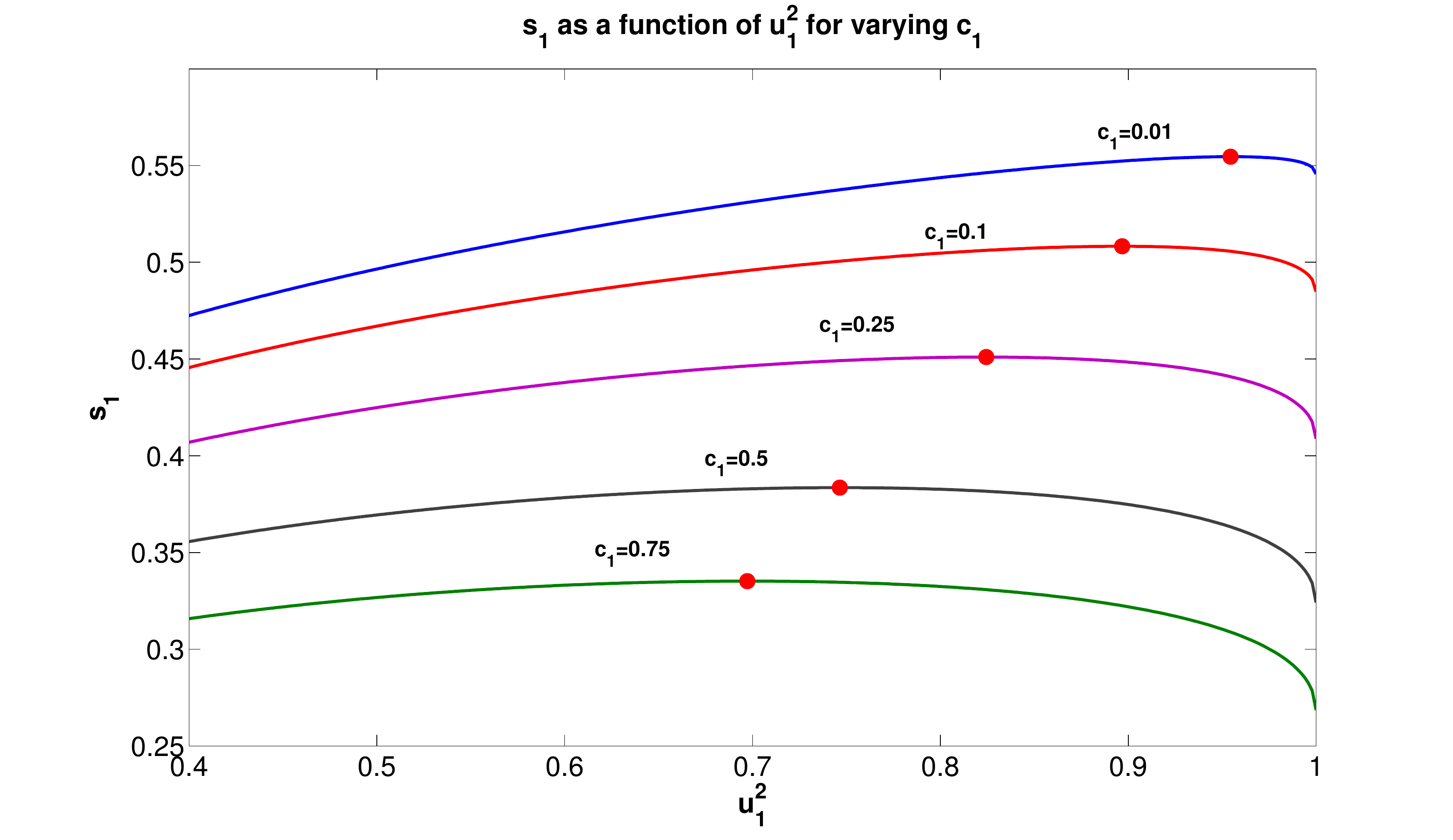}
\label{fig:s1c1}}
\quad
\subfigure[Varying $c_2$ ($c_1=0.1$)]{%
\includegraphics[width=0.65\textwidth]{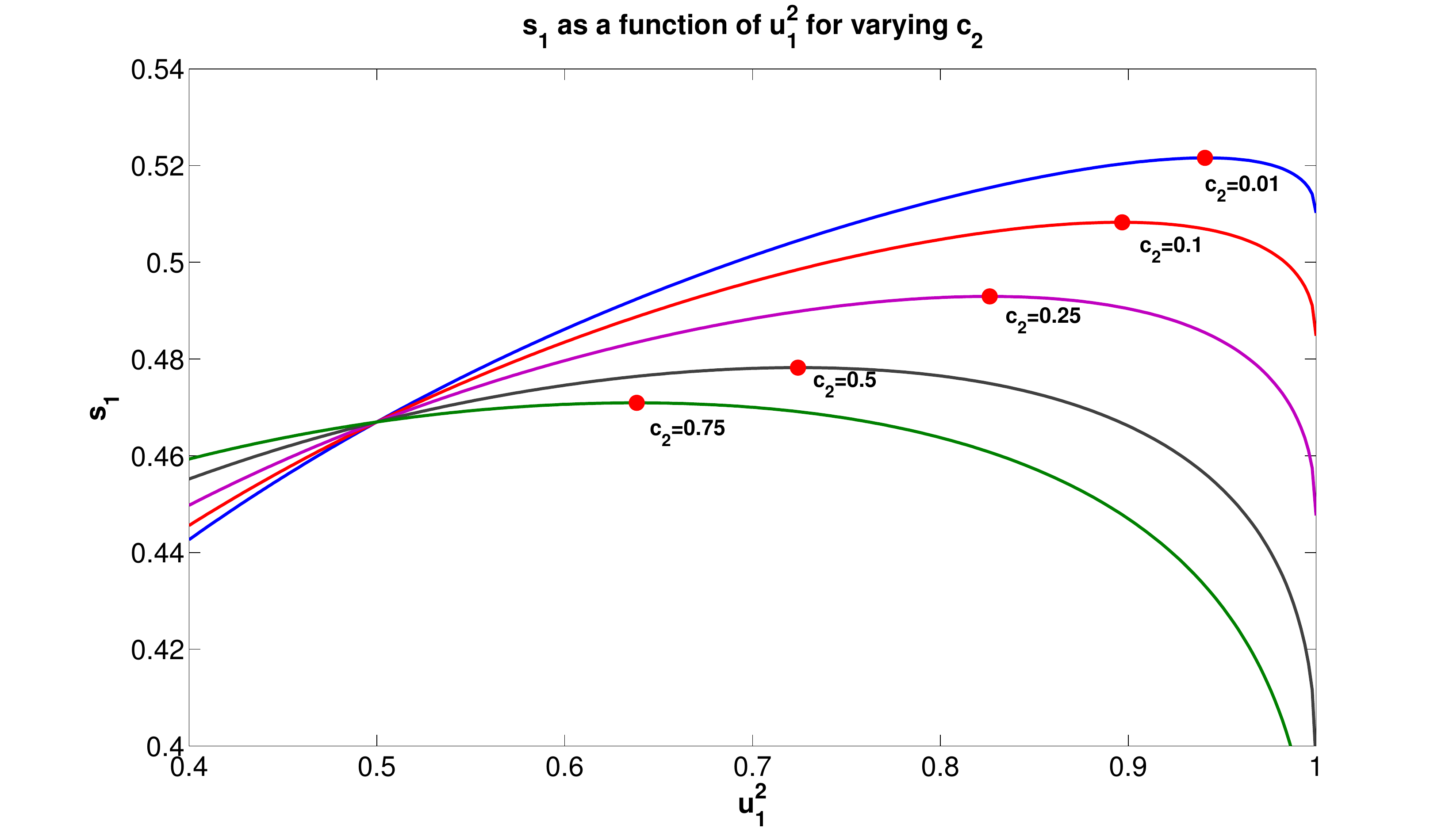}
\label{fig:s1c2}}}
\end{figure}
From Figure \ref{fig:s1c} it is evident that the cancellation rate $c_1$ has less of an impact than $c_2$ on the choice of $u_1$ that optimizes market share for firm 1.  To illustrate the dependence of the optimal $u_1$ on $c_1$ and $c_2$, Figure \ref{fig:s2c} gives a contour plot of the value $u_1$ that maximizes $s_1$ for varying $c_1$ and $c_2$.  The optimal choice of $u_1$ is determined numerically.  For example, if $c_1=0.3$ and $c_2=0.2$, then $s_1$ is maximized when $u_1^2\approx 0.76$ ($u_1\approx 0.87$), which means that to optimize steady sales, firm 1 should dedicate around 76\% of their advertising budget toward customers of firm 2.  In this case the steady state market shares for firm 1 and firm 2 are $s_1\approx0.429$ and $s_2\approx 0.397$.

\begin{figure}[ht]
\caption{Contour plot of $u_1^2$ that maximizes $s_1$ for $c_1$ and $c_2$ in \eqref{eq:s1vsu1} \label{fig:s2c}}
{
\includegraphics[width=0.75\textwidth]{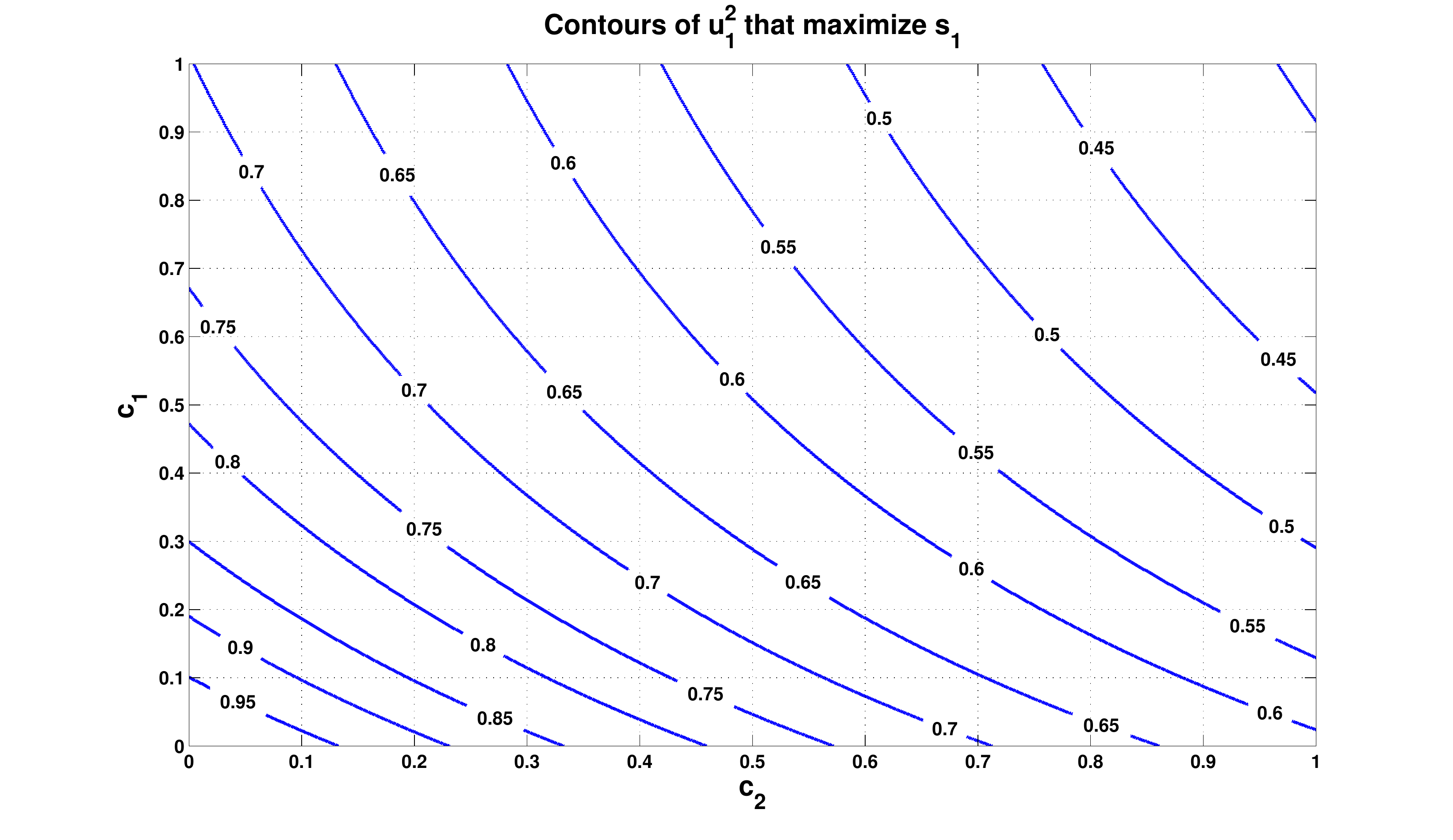}
}
\end{figure}

However, if we examine the case $c_1=c_2=0.9$, then we have that
$u_1^2\approx 0.43$ ($u_1\approx 0.65$) maximizes $s_1$.  Then the optimal (for firm 1) equilibrium market shares are
\begin{equation}
s_1\approx 0.306,\qquad s_2\approx 0.313.
\label{eq:lower}
\end{equation}
This leads to an important observation:
\begin{proposition}
When cancellation rates, advertising budgets, and effectiveness to effort ratios for each firm are all the same, the targeted strategy that maximizes sales rate/market share does not necessarily lead to higher sales than a competitor practicing nontargeted advertising.  
\end{proposition}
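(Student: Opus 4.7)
The plan is to prove the proposition by exhibiting a specific parameter choice (a counterexample in the existential sense) satisfying all the symmetry hypotheses, under which firm~2's nontargeted steady-state share strictly exceeds firm~1's optimal targeted steady-state share. The structure is thus: specialize the duopoly steady state, maximize firm~1's share as a function of the single free control $u_1$, and check numerically that the maximum value is still less than firm~2's share.

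First I would impose the symmetry required by the hypothesis: $m=1$, $\rho_1 = \rho_2 = \sigma_1 = 1$, common budget $B=1$, and common cancellation rate $c_1 = c_2 = c$. Because firm~2 practices a nontargeted policy ($u_2 = v_2$) under the budget constraint $u_2^2+v_2^2 = 1$, firm~2 is forced into $u_2 = v_2 = 1/\sqrt{2}$. Substituting these values into the duopoly steady-state expressions \eqref{eq:app1b}--\eqref{eq:app2a} recovers \eqref{eq:s1vsu1} for $s_1(u_1;c)$, together with the simple expression
\[
s_2(u_1;c) = \frac{1/\sqrt{2}}{u_1 + 1/\sqrt{2} + c}
\]
for $s_2$ as a function of firm~1's control and the common cancellation rate.

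Next, for the particular choice $c = 0.9$ I would maximize $s_1(u_1;0.9)$ over $u_1 \in [0,1]$. The critical equation $\partial s_1/\partial u_1 = 0$ does not admit a closed form (it involves radicals in $u_1$), but a one-dimensional root-finder on the smooth function $s_1(\cdot\,;0.9)$ yields a unique interior critical point $u_1^\ast \approx 0.654$ with $(u_1^\ast)^2 \approx 0.43$; comparison against the endpoint values $s_1(0;0.9)$ and $s_1(1;0.9)$ confirms that $u_1^\ast$ is the global maximizer. Evaluating both steady-state shares at $u_1 = u_1^\ast$ gives $s_1(u_1^\ast;0.9) \approx 0.306$ and $s_2(u_1^\ast;0.9) \approx 0.313$, so the strict inequality $s_1(u_1^\ast;0.9) < s_2(u_1^\ast;0.9)$ furnishes the desired counterexample and proves the proposition.

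The only real obstacle is that the argument is inherently numerical: the optimality condition for $u_1$ is a transcendental equation in $c$, so one cannot display $u_1^\ast(c)$ or the gap $s_2 - s_1$ in closed form. To avoid any impression that this is a numerical accident, I would point out that both $s_1(u_1^\ast(c);c)$ and $s_2(u_1^\ast(c);c)$ depend continuously on $c$ via the implicit-function theorem (the Hessian of $s_1$ in $u_1$ is negative at $u_1^\ast$), so the strict inequality observed at $c = 0.9$ persists on an open interval of cancellation rates. This robustness, together with the explicit numerical verification at $c = 0.9$, establishes the proposition.
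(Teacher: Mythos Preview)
Your proposal is correct and follows essentially the same approach as the paper: the paper establishes this proposition by exhibiting exactly the same numerical counterexample with $c_1=c_2=0.9$, obtaining $u_1^2\approx 0.43$, $s_1\approx 0.306$, and $s_2\approx 0.313$. Your added continuity/robustness remark via the implicit-function theorem is a nice touch that the paper does not include, but the core argument is identical.
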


In other words, if a leading market share is more important than maximizing the market share, a different allocation of effort may be in order.

\begin{quote}
{\bf Question 3:} Given a single competitor with a nontargeted advertising policy, which allocation of effort will ensure a greater steady state market share than your competitor?
\end{quote}
The answer to this question (what choices of $u_1$ ensure $s_1>s_2$) varies greatly depending on the relative values of $c_1$ and $c_2$.  Subtracting \eqref{eq:app2a} from \eqref{eq:app1b} and employing
the assumptions $\rho_1=\rho_2=\sigma_1=1$, $u_2=1/\sqrt{2}$, and $v_1=\sqrt{1-u_1^2}$, then $s_1-s_2>0$ whenever
\begin{equation}
\sqrt{1-u_1^2}\left(c+u_1+\frac{1}{\sqrt{2}}\right)+\frac{1}{\sqrt{2}}
\left(u_1-c-\frac{1}{\sqrt{2}}  \right)
 > 0.
\label{eq:q4a}
\end{equation}
The region in the $u_1^2,c$-plane that satisfies \eqref{eq:q4a} (and therefore ensures $s_1>s_2$) is shaded in Figure \ref{fig:s2d}, along with  curves that identify the value of $u_1^2$ that maximizes $s_1-s_2$ and the value of $u_1^2$ that maximizes $s_1$.  Note that $u_1=1/\sqrt{2}$ will always guarantee $s_1=s_2$ as this reduces firm 1 to the same nontargeted advertising policy as firm 2.
\begin{figure}[ht]
\caption{{the shaded area indicates the choices of $u_1^2$  that ensure $s_1>s_2$ when $c_1=c_2=c$. \label{fig:s2d}}{The solid curve indicates the value of $u_1^2$ that maximizes $s_1-s_2$, while the dotted curve represents the value of $u_1^2$ that maximizes $s_1$.}}
{
\includegraphics[width=0.8\textwidth]{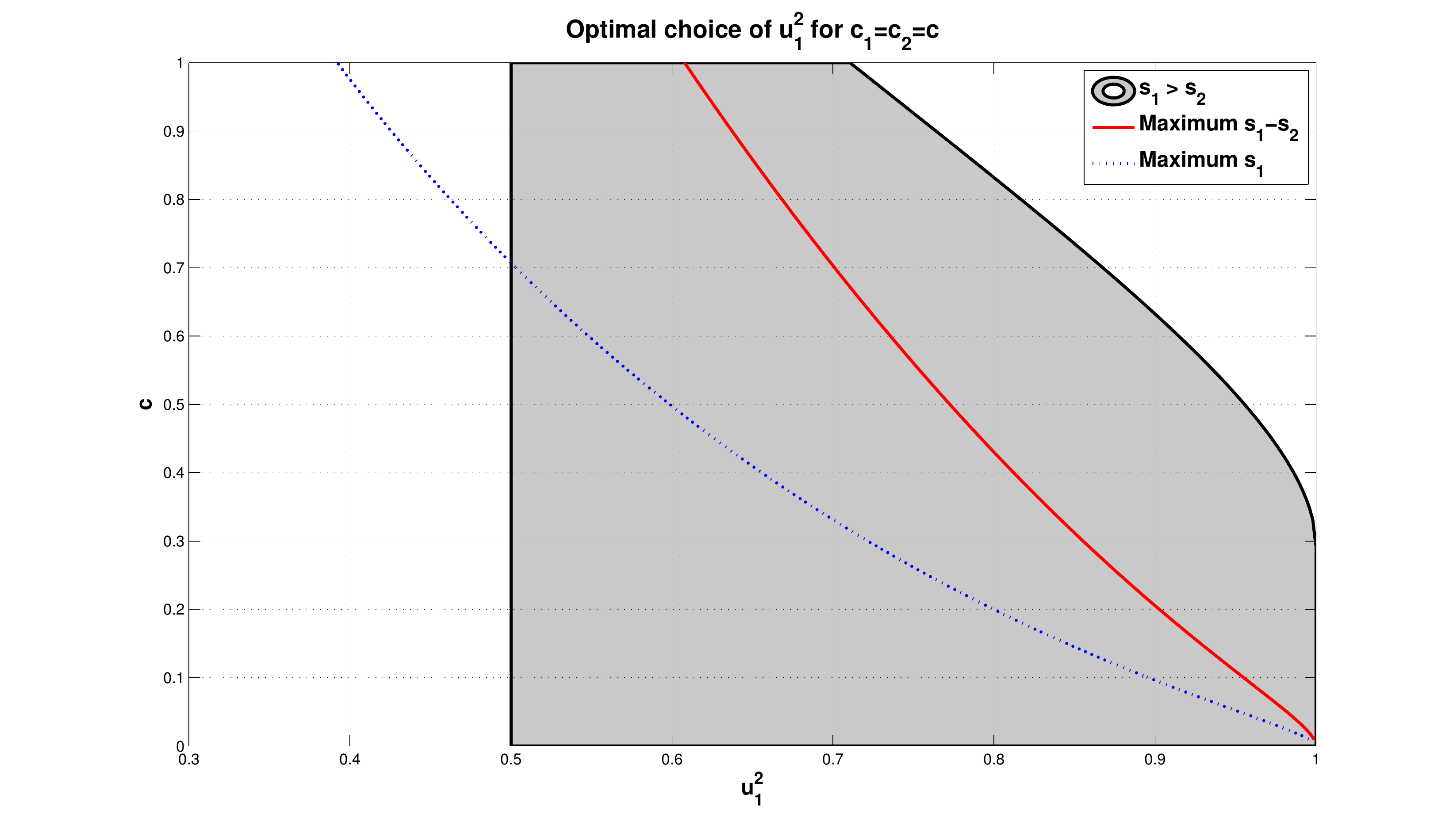}
}

\end{figure}
In relation to the example illustrated in \eqref{eq:lower}, the choice of effort $u_1^2=0.637$ ($u_1=0.798$) leads to steady market shares of
\begin{equation}
s_1\approx 0.299,\qquad s_2\approx 0.293,
\label{eq:higher}
\end{equation}
again reinforcing the conclusion that maximized market share does not imply leading market share, and vice versa.

\begin{proposition}
When employing targeted advertising with a competitor who practices nontargeted advertising with an identical budget and sales decay rate $0< c\le 1$, a strategy that ensures market share greater than your competitor is always possible.  Additionally, the strategy that maximizes market share does not coincide with the strategy that maximizes the lead in market share over your competitor.
\end{proposition}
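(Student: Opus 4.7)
The plan is to verify the two claims independently using the explicit duopoly steady-state formulas \eqref{eq:app1b}--\eqref{eq:app2a} with $\rho_1=\rho_2=\sigma_1=m=B=1$, $c_1=c_2=c$, and $u_2=v_2=1/\sqrt{2}$.

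For the first claim, the key observation is that the symmetric choice $u_1 = 1/\sqrt{2}$ forces firm $1$ to mimic firm $2$'s nontargeted policy, so $s_1 = s_2$ there. First I would put $s_1-s_2$ over a common denominator and reduce the inequality $s_1 > s_2$ to positivity of the single-variable function
\[
g(u_1) = \sqrt{1-u_1^2}\left(u_1 + c - \tfrac{1}{\sqrt{2}}\right) + \tfrac{1}{\sqrt{2}}\left(u_1 - c - \tfrac{1}{\sqrt{2}}\right),
\]
which vanishes at $u_1 = 1/\sqrt{2}$. A direct differentiation gives $g'(1/\sqrt{2}) = \sqrt{2} - c$, since the factor $u_1+c-1/\sqrt{2}$ in the product-rule expansion of the first summand vanishes at that point. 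Because $0 < c \le 1 < \sqrt{2}$, this derivative is strictly positive, and by continuity $g(u_1) > 0$ on a right-neighborhood of $1/\sqrt{2}$, producing an admissible allocation with $s_1 > s_2$.

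For the second claim, I would exploit that in this symmetric setting \eqref{eq:app2a} simplifies to $s_2(u_1) = (1/\sqrt{2})/(u_1 + 1/\sqrt{2} + c)$, a strictly decreasing function of $u_1$ with $\partial s_2/\partial u_1 < 0$ throughout $[0,1]$. A brief endpoint inspection of \eqref{eq:app1b} (consistent with Figure \ref{fig:s1c}) confirms that the maximizer $u_1^a$ of $s_1$ lies in the open interior $(0,1)$, so $\partial s_1/\partial u_1\big|_{u_1^a} = 0$. It then follows that
\[
\left.\frac{\partial(s_1-s_2)}{\partial u_1}\right|_{u_1 = u_1^a} = -\left.\frac{\partial s_2}{\partial u_1}\right|_{u_1 = u_1^a} > 0,
\]
meaning $s_1 - s_2$ is still strictly increasing at $u_1^a$. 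Any maximizer $u_1^b$ of $s_1 - s_2$ must therefore satisfy $u_1^b > u_1^a$, so the two optimal allocations cannot coincide.

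The hard part is essentially bookkeeping rather than conceptual: carefully justifying that the maximizer of $s_1$ is interior (so the first-order condition applies) requires comparing the explicit boundary values of \eqref{eq:app1b} at $u_1 = 0, 1$ against interior values. Once that verification is in hand, both halves of the proposition reduce to the two elementary sign facts $\sqrt{2} - c > 0$ and $-\partial s_2/\partial u_1 > 0$, so no further analytical machinery is needed.
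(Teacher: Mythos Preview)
Your argument is correct and is considerably more analytic than what the paper actually does. The paper does not give a formal proof of this proposition; it derives the sign condition \eqref{eq:q4a}, observes that $u_1=1/\sqrt{2}$ yields $s_1=s_2$, and then relies on Figure~\ref{fig:s2d} and the numerical pair \eqref{eq:lower}--\eqref{eq:higher} to support both assertions. Your route---perturbing away from the symmetric allocation via $g'(1/\sqrt{2})>0$ for the first claim, and exploiting the strict monotonicity of $s_2$ in $u_1$ together with the interior first-order condition for $s_1$ for the second---turns those observations into an actual proof and yields the sharper conclusion $u_1^b>u_1^a$. Incidentally, your expression for $g$ is the correct reduction of $s_1-s_2$; the printed \eqref{eq:q4a} has a sign typo in the first bracket (it should read $c+u_1-\tfrac{1}{\sqrt{2}}$, as you have it, since otherwise the expression fails to vanish at $u_1=1/\sqrt{2}$).

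One small slip to fix: you write that the factor $u_1+c-1/\sqrt{2}$ ``vanishes'' at $u_1=1/\sqrt{2}$, but it equals $c$ there, not zero. The product-rule term it sits in therefore contributes $(-1)\cdot c=-c$ (since $\frac{d}{du_1}\sqrt{1-u_1^2}\big|_{1/\sqrt{2}}=-1$), and together with the remaining $\sqrt{1-u_1^2}+1/\sqrt{2}=\sqrt{2}$ this gives your stated value $g'(1/\sqrt{2})=\sqrt{2}-c$. So the conclusion stands, but the justification sentence should be corrected. You should also spell out the endpoint check for the interiority of $u_1^a$ rather than deferring to the figure: comparing $s_1(0)$, $s_1(1)$, and $s_1(1/\sqrt{2})$ directly from \eqref{eq:s1vsu1} is enough and keeps the proof self-contained.
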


\subsection{Extension to a Single Firm with Service/Product Tiers}\label{sec:tier}
The targeted advertising model can also be adapted to represent the sales dynamics of a single firm's internal competition among products or services. 
Consider the case that a single firm offers a product/service at several ($n$) different levels or tiers, and assume that the customers of each tier are mutually-exclusive (i.e., no single customer simultaneously contributes to the sales of more than one level/tier).  
Often the firm's objectives are twofold: to recruit new customers from the market potential, as well as to have existing customers of lower-tier products/services upgrade to a higher tier.
Assume the firm wishes to prevent downgrades in level, so higher tiers consider customers of lower tiers as potential customers, but not vice-versa. 

Let $k=1, 2, \ldots, n$ represent the $n$ tiers of product/service, with the convention that $k=1$ represents the highest tier, $k=2$ the next highest, and so on, with $k=n$ representing the entry-level tier.  The multi-tier model is based on modifying \eqref{eq:sel2} to incorporate the following assumptions.  Since all non-customers of the firm comprise the market potential, the firm (all tiers) would have a uniform advertising effort $v(t)$ towards $\varepsilon(t)$.  However, the effectiveness to effort ratios $\sigma_k$ will vary among the tiers, in particular if new customers are more likely to sign up for the entry-level tier as opposed to the highest tier.  Additionally, tier 1 would have an advertising campaign directed towards customers of tiers $2,\ldots, n$, tier 2 would advertise towards tiers $3,\ldots, n$ and so on (tier $n$ would only gain customers from the market potential).  Each tier would have its own cancellation rate.  Then the dynamics of the market share $s_k$ of tier $k$ would be modeled by
\begin{equation}
\dot{s}_k(t)=\sigma_k v(t)\underbrace{\left(m-\sum_{j=1}^ns_j(t)\right)}_{\text{market potential}} +\rho_ku_k(t)\underbrace{\sum_{j>k}^ns_j(t)}_{\text{lower tiers}}  -s_k(t)\left(c_k(t)+\underbrace{\sum_{j<k}^n\rho_ju_j(t)}_{\text{higher tiers}}\right).
\label{eq:serv1}
\end{equation}

Dropping the notational dependence upon $t$, when there are two tiers within a service/product line, we have
\begin{eqnarray}
\dot{s}_1&=& \sigma_1 v \left(m -s_1-s_2\right)+\rho_1u_1s_2-s_1c_1,\label{eq:serv2}\\
\dot{s}_2&=& \sigma_2 v \left(m -s_1-s_2\right)-s_2\left(c_2+\rho_1u_1\right).\label{eq:serv3}
\end{eqnarray}
One objective would be to minimize the equilibrium market potential, thereby ensuring the firm has as many customers as possible.  For $n=2$, the equilibrium solution of \eqref{eq:serv2}--\eqref{eq:serv3} is given by
\[s_1=\frac{mv((\sigma_1+\sigma_2)\rho_1u1+c_2\sigma_1)}{((\sigma_1+\sigma_2)v+c_1)\rho_1u_1+(c_1\sigma_2+c_2\sigma_1)v+c_1c_2},\]
and 
\[s_2=\frac{mv\sigma_2c_1}{((\sigma_1+\sigma_2)v+c_1)\rho_1u_1+(c_1\sigma_2+c_2\sigma_1)v+c_1c_2}.\]
Then the equilibrium market potential is given by
\begin{equation}
\varepsilon = \frac{mc_1(c_2+\rho_1u_1)}{((\sigma_1+\sigma_2)v+c_1)\rho_1u_1+(c_1\sigma_2+c_2\sigma_1)v+c_1c_2},
\label{eq:serv4}
\end{equation}
and can be minimized with respect to one of the parameters given particular choices for the remaining parameters.

A practical interpretation of this model would be the case where 1) the advertising budget is fixed (at $1$) so $u_1^2+v^2=1$, and 2) the effectiveness to effort ratios towards potential are the same for both tiers ($\sigma_1=\sigma_2=\sigma$).  Further, if $m=\rho_1=1$, we have that \eqref{eq:serv4} gives an equilibrium market potential of
\begin{equation}
\varepsilon = \frac{c_1(c_2+u_1)}{(\sigma(c_1+c_2+2u_1)\sqrt{1-u_1^2}+c_1(c_2+u_1))}.
\label{eq:serv5}
\end{equation}
In this case, an advertising strategy that minimizes market potential (maximizing market share) can be described.  

\begin{theorem}\label{thm:tier1}
Let $\sigma_1=\sigma_2=\sigma=1$, $\rho_1=1$, $m=1$, and assume $u_1^2+v^2=1$.  If $c_1\ge c_2$,  then $u_1=0$ minimizes the equilibrium market potential \eqref{eq:serv5}, and if $c_1<c_2$, then the equilibrium market potential is minimized for 
\begin{equation}
u_1^*=\frac{10c_2^2-6c_1c_2+\Delta_1^{2/3}-4c_2\Delta_1^{1/3}}{6\Delta_1^{1/3}},
\label{eq:serv6}
\end{equation}
where 
\[\Delta_1 = 36{ c_1}c_2^{2}-28c_2^{3}-
54{ c_1}+54{ c_2}+ 6\sqrt{\Delta_0},\]
and \[\Delta_0=6\left( { c_1}-{ c_2} \right)  \left( c_2^{5}+2{ c_1}
 c_2^{4}+ \left( c_1^{2}+14 \right)  c_2^{3}-18{
 c_1} c_2^{2}-{\frac {27{ c_2}}{2}}+{\frac {27{ c_1}
}{2}} \right).\]
\end{theorem}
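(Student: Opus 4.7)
The plan is to minimize $\varepsilon(u_1)$ from \eqref{eq:serv5} directly via single-variable calculus on $u_1\in[0,1]$. Since $\varepsilon = c_1/(c_1+g(u_1))$ where
\[
g(u_1)=\frac{(c_1+c_2+2u_1)\sqrt{1-u_1^2}}{c_2+u_1},
\]
minimizing $\varepsilon$ is equivalent to maximizing $g$, and since $g$ is continuous on $[0,1]$ with $g(1)=0$, any interior maximizer is a critical point where $g'=0$.

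The next step is to reduce the first-order condition to a polynomial equation. Logarithmic differentiation yields
\[
\frac{g'(u_1)}{g(u_1)}=\frac{2}{c_1+c_2+2u_1}-\frac{u_1}{1-u_1^2}-\frac{1}{c_2+u_1},
\]
and clearing denominators gives, after expansion and collection of terms, the cubic
\[
P(u_1):=2u_1^3+4c_2u_1^2+c_2(c_1+c_2)u_1+(c_1-c_2)=0.
\]
Moreover $g'(u_1)$ and $-P(u_1)$ have the same sign on $[0,1)$, which lets the sign of $P$ control the monotonicity of $g$.

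For the case $c_1\ge c_2$ the argument is then essentially free: every coefficient of $P$ is nonnegative and $c_1-c_2\ge 0$, so $P(u_1)\ge 0$ for all $u_1\ge 0$ (with equality only at $u_1=0$ when $c_1=c_2$). Consequently $g'\le 0$ on $[0,1)$, so $g$ attains its maximum at $u_1=0$ and, equivalently, $\varepsilon$ attains its minimum there, proving the first half of the theorem. For the case $c_1<c_2$, note that $P'(u_1)=6u_1^2+8c_2u_1+c_2(c_1+c_2)>0$ for $u_1\ge 0$, so $P$ is strictly increasing on $[0,\infty)$, and since $P(0)=c_1-c_2<0$ while $P(1)>0$, there is a unique root $u_1^*\in(0,1)$ where $g'$ switches from positive to negative; this is the maximizer of $g$ and hence the minimizer of $\varepsilon$.

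Finally I would write $u_1^*$ in closed form via Cardano. Substituting $u_1=w-2c_2/3$ depresses $P$ to $w^3+pw+q=0$ with
\[
p=\frac{c_2(3c_1-5c_2)}{6},\qquad q=\frac{16c_2^3}{27}-\frac{c_2^2(c_1+c_2)}{3}+\frac{c_1-c_2}{2},
\]
and the Cardano form $w=A-p/(3A)$ with $A^3=-q/2+\sqrt{q^2/4+p^3/27}$ yields, after scaling $A=\Delta_1^{1/3}/6$ (so that $\Delta_1=216A^3=-108q+216\sqrt{q^2/4+p^3/27}$), exactly the expression \eqref{eq:serv6}. The identities $-108q=36c_1c_2^2-28c_2^3-54c_1+54c_2$ and $216\sqrt{q^2/4+p^3/27}=6\sqrt{\Delta_0}$ are matters of direct, if tedious, algebra, and checking that this real branch (as opposed to a complex cube root) is the one lying in $(0,1)$ follows from the uniqueness of the positive root of $P$ established above. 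The main obstacle is precisely this last bookkeeping: keeping the Cardano expression in the normalized form the theorem states, and confirming that the principal real cube root of the chosen $A^3$ produces the real $u_1^*\in(0,1)$ rather than a complex conjugate pair.
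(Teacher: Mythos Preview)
Your proposal is correct and follows essentially the same route as the paper: both reduce the first-order condition to the cubic $P(u_1)=2u_1^3+4c_2u_1^2+c_2(c_1+c_2)u_1+(c_1-c_2)$, handle the case $c_1\ge c_2$ by a sign argument on the coefficients, and in the case $c_1<c_2$ combine $P(0)<0$, $P(1)>0$, and $P'>0$ on $[0,1]$ to locate a unique interior root, then invoke Cardano for the explicit formula. Your preliminary rewriting $\varepsilon=c_1/(c_1+g(u_1))$ and the explicit depression/Cardano bookkeeping are presentational additions rather than a different strategy.
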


\begin{proof}  Differentiating \eqref{eq:serv5} with respect to $u_1$ gives
\[\frac{\partial \varepsilon}{\partial u_1} = {\frac {{ c_1} \left( 2{{ u_1}}^{3}+4{ c_2}{{ u_1}}^{2
}+{ c_2} \left( { c_1}+{ c_2} \right) { u_1}+{ c_1}-{ 
c_2} \right) }{\sqrt {1-{{ u_1}}^{2}} \left(  \left( { c_1}+{ 
c_2}+2{ u_1} \right) \sqrt {1-{{ u_1}}^{2}}+{ c_1} \left( {
 c_2}+{ u_1} \right)  \right) ^{2}}}.\]  This is defined for all $c_1>0, c_2>0$, and $0\le u_1\le 1$.
Let 
\[p(u_1)=2{{ u_1}}^{3}+4{ c_2}{{ u_1}}^{2
}+{ c_2} \left( { c_1}+{ c_2} \right) { u_1}+{ c_1}-{ 
c_2}.\]
Then the zeros of $\partial \varepsilon/\partial u_1$ are the zeros of $p$.  We consider two cases.\\
{\bf Case 1:} ($c_1\ge c_2$)  Then $p>0$ for all $0\le u_1 \le 1$ and thus $\varepsilon$ is at a minimum when $u_1=0$.\\
{\bf Case 2:} ($c_1<c_2$)  Since $p(0)<0$ and $p(1)>0$, the Intermediate Value Theorem implies the existence of a $0<u_1^*<1$ such that $p(u_1^*)=0$, and an application of Rolle's theorem shows $u_1^*$ is unique as $p'(u_1)>0$ everywhere in $0<u_1<1$.  In this case $u_1^*$ is given by
\eqref{eq:serv6} and is guaranteed to minimize $\varepsilon$.
\end{proof}

Figure \ref{fig:serv1} gives a contour map of the values of $u_1$ that minimize $\varepsilon$ for different choices of $c_1$ and $c_2$.
\begin{figure}[ht]
\caption{Value of $u_1$ that minimizes $\varepsilon$ in \eqref{eq:serv5} \label{fig:serv1}}
{
\includegraphics[width=0.75\textwidth]{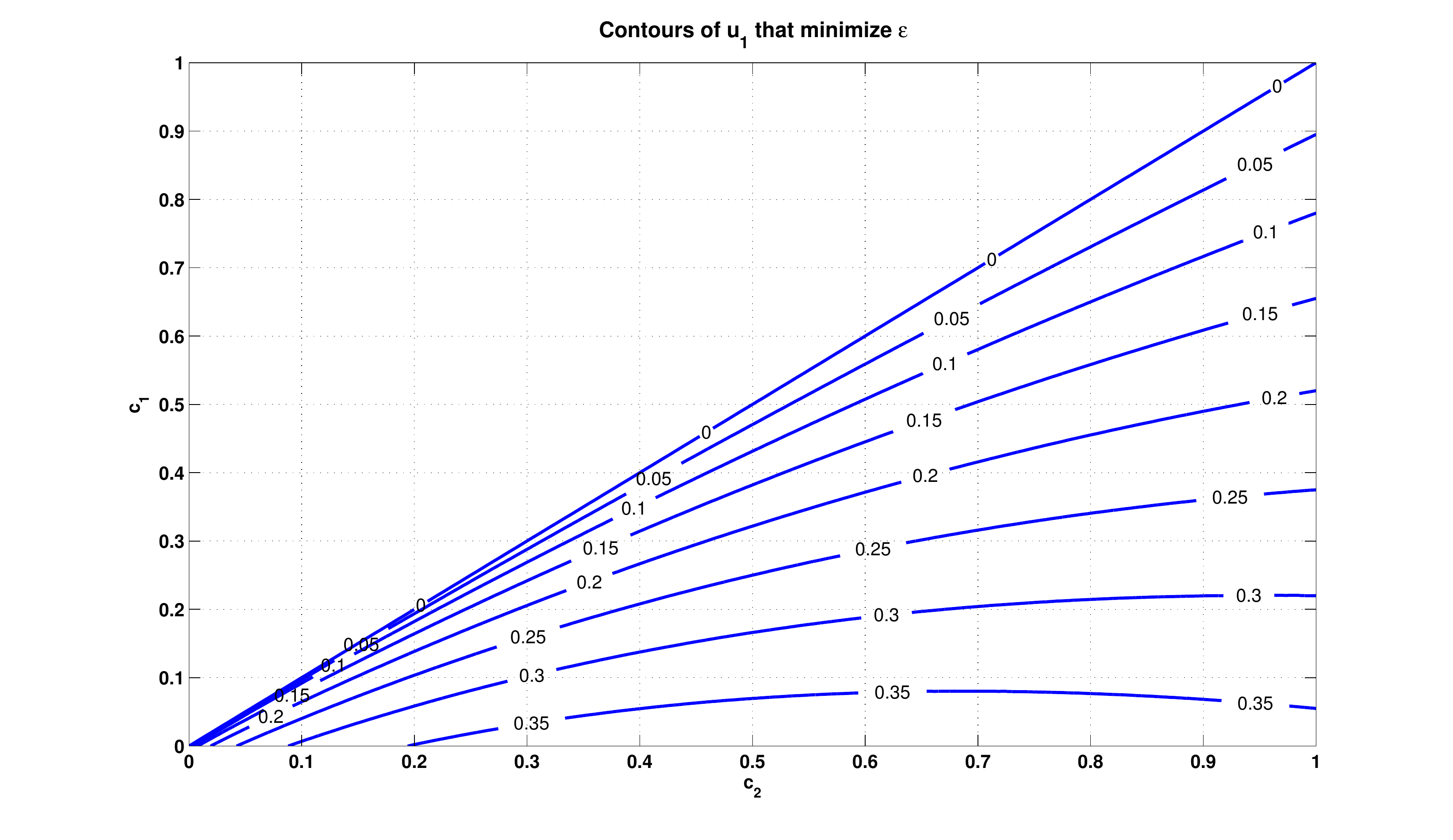}
}
\end{figure}
For example, if the cancellation rate for tier 1 is $c_1=0.1$ and for tier 2 is $c_2=0.2$, then an allocation of around 6.25\% of the advertising budget (as $u_1\approx 0.25$) targeting upgrades for customers of the second tier will minimize the market potential (maximize the total market share of the firm).

One of the main implications of Theorem \ref{thm:tier1} is that if customers of the higher tier are more likely to cancel service than those of the lower tier ($c_1\ge c_2$), the market potential is minimized when all advertising effort is dedicated toward the market potential, implying that any overall gain in market share from encouraging upgrades is lost through the higher cancellation rate for those customers who do upgrade.

\begin{proposition}
The advertising strategy that minimizes total market potential for a firm that offers two tiers of product/service depends on the relative effectiveness of campaigns directed towards existing customers and market potential and the sales decay rates for each tier.  In particular, when the effectiveness of both campaigns are identical and the  sales decay rate is larger for the higher tier, all advertising effort should be directed towards the market potential and no effort should be directed towards enticing existing customers to upgrade. 
\end{proposition}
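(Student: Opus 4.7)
The plan is to reduce the minimization to analyzing the sign of a single cubic polynomial on $[0,1]$. First I would compute $\partial \varepsilon/\partial u_1$ from \eqref{eq:serv5} directly via the quotient rule; after simplification the factor that controls the sign is the cubic
\begin{equation*}
p(u_1)=2u_1^3+4c_2 u_1^2+c_2(c_1+c_2)u_1+(c_1-c_2),
\end{equation*}
since the denominator of $\partial\varepsilon/\partial u_1$ is strictly positive on $[0,1)$ (it contains $\sqrt{1-u_1^2}$ times a positive quantity). So the critical points of $\varepsilon$ on $(0,1)$ are exactly the roots of $p$ in that interval, and the boundary $u_1=0$ must be checked separately.

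Next I would establish monotonicity of $p$. Differentiating gives $p'(u_1)=6u_1^2+8c_2 u_1+c_2(c_1+c_2)$, which is strictly positive for all $u_1\ge 0$ under the standing positivity assumptions on $c_1,c_2$. Therefore $p$ is strictly increasing on $[0,1]$, with $p(0)=c_1-c_2$ and $p(1)=2+3c_2+c_2^2+c_1+c_1c_2>0$.

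I would then split into the two cases in the statement. \textbf{Case 1} ($c_1\ge c_2$): then $p(0)\ge 0$, so by monotonicity $p\ge 0$ on $[0,1]$, hence $\partial\varepsilon/\partial u_1\ge 0$, and $\varepsilon$ is nondecreasing. The minimum over admissible $u_1\in[0,1]$ is attained at $u_1=0$. \textbf{Case 2} ($c_1<c_2$): $p(0)<0<p(1)$ together with monotonicity yield a unique root $u_1^*\in(0,1)$; moreover $p<0$ on $[0,u_1^*)$ and $p>0$ on $(u_1^*,1]$, so $\varepsilon$ is strictly decreasing then strictly increasing, giving a unique interior minimizer at $u_1^*$.

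The final, and most tedious, step is producing the closed form \eqref{eq:serv6}. I would apply Cardano's formula to the depressed form of $p(u_1)=0$ obtained by the substitution $u_1=y-\tfrac{2c_2}{3}$; this yields a depressed cubic $y^3+Py+Q=0$ whose real root can be written using the standard radical expression, and back-substituting gives \eqref{eq:serv6} with the discriminant-like quantities $\Delta_0,\Delta_1$ as displayed. The genuine obstacle is not the calculus but this algebraic bookkeeping: one must verify that the particular real cube root chosen lies in $(0,1)$ and matches the sign conventions in the stated formula; since the cubic has exactly one real root in the relevant case (the other two being complex conjugates, by the monotonicity of $p$ noted above), selecting the principal real cube root of $\Delta_1$ gives the unique admissible minimizer, closing the argument.
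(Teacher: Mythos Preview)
Your proposal is correct and essentially identical to the paper's argument: the Proposition is a qualitative restatement of Theorem~\ref{thm:tier1}, and your proof reproduces that theorem's proof---compute $\partial\varepsilon/\partial u_1$, reduce to the cubic $p(u_1)=2u_1^3+4c_2u_1^2+c_2(c_1+c_2)u_1+(c_1-c_2)$, use $p'>0$ on $[0,1]$ (the paper phrases this via Rolle's theorem) to split into the cases $c_1\ge c_2$ and $c_1<c_2$, and then invoke Cardano for the closed form in the second case. The only cosmetic difference is that you spell out the sign-change argument for the minimizer a bit more explicitly than the paper does.
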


\section{Summary}\label{sec:sum}
Existing oligopoly advertising models were modified to incorporate market share decrease by cancellation and subsequently extended to allow for differing advertising policies for the market potential and competitor customer base.  Closed-loop Nash equilibrium strategies were found for both the nontargeted and targeted models.  Steady state market shares were identified in the case of constant parameters, and several applications of the models were employed to address strategic questions when the ability to discern competitors' customers from market potential is  available.
The analysis points to several interesting conclusions:
\begin{itemize}
\item When operating with a fixed budget, the allocation of effort towards competitors' customers and market potential that optimizes the current rate of sales increase is independent of whether competitors employ targeted or nontargeted advertising as well as sales decay rates. 
\item When employing a targeted advertising approach, the distribution of effort that maximizes steady state market share does not always lead to a market share greater than a  competitor practicing nontargeted advertising.
\item For a single firm with multiple tiers of product/service, the allocation of effort that minimizes market potential is heavily dependent on the relative sales decay rates of the tiers.
\end{itemize}
It is anticipated that the targeted model and the observations made here may be of use to firms as they gain the ability to group advertising targets.
Further work in this direction could include:
\begin{itemize}
\item Extension to the case of differing efforts for different competitors.
\item Extension to competing firms with multiple product lines (in the spirit of \cite{Fruchter2001}).
\item Incorporating more sophisticated models of effort and effectiveness of customer retention activities.
\item Extension of the single-firm multi-tier analysis to multiple firms each offering several tiers of product/service.
\end{itemize}


%
%
%






\bibliographystyle{spmpsci}      
\bibliography{refs}   

%
%

\end{document}